\documentclass{article}   	
\usepackage{geometry}               
\geometry{a4paper}                  
\usepackage{graphicx}
\usepackage[usenames,dvipsnames]{xcolor}
\usepackage{amssymb,amsmath,amsthm,amsfonts}
\usepackage{bm,cancel}                     
\usepackage[english]{babel}
\usepackage[latin1]{inputenc}
\usepackage[colorlinks]{hyperref}
\hypersetup{linkcolor=blue,citecolor=blue,filecolor=black,urlcolor=blue}

\usepackage[sc]{mathpazo}

%
%

\newtheorem{proposition}{Proposition}[section]
\newtheorem{theorem}[proposition]{Theorem}
\newtheorem{corollary}[proposition]{Corollary}
\newtheorem{lemma}[proposition]{Lemma}

\theoremstyle{definition}

\theoremstyle{remark}
\newtheorem{remark}[proposition]{Remark}

\theoremstyle{open}
\newtheorem{open}[proposition]{Open Problem}

\numberwithin{equation}{section}

\newcommand{\N}{{\mathbb{N}}}

\newcommand{\R}{{\mathbb{R}}}






\newcommand{\beq}{\begin{equation}}
\newcommand{\eeq}{\end{equation}}
\newcommand{\ben}{\begin{enumerate}}
\newcommand{\een}{\end{enumerate}}
\newcommand{\bit}{\begin{itemize}}
\newcommand{\eit}{\end{itemize}}




\newcommand{\capu}[1][\alpha]{{\partial_t^{#1}}}




\title{Time-fractional equations with reaction terms: fundamental solutions
and asymptotics}

\author{Serena Dipierro\thanks{Department
of Mathematics and Statistics,
University of Western Australia,
35 Stirling Highway,
Crawley WA 6009, Australia. {\tt serena.dipierro@uwa.edu.au} },
Benedetta Pellacci\thanks{Dipartimento di Matematica
e Fisica, Universit\`a della Campania ``Luigi Vanvitelli'',
Viale Lincoln 5,
81100 Caserta, Italy. {\tt benedetta.pellacci@unicampania.it}},
Enrico Valdinoci\thanks{Department of Mathematics and Statistics,
University of Western Australia,
35 Stirling Highway,
Crawley WA 6009, Australia. {\tt enrico.valdinoci@uwa.edu.au}}, and
Gianmaria Verzini\thanks{Dipartimento di Matematica, Politecnico di Milano,
Piazza Leonardo da Vinci 32, 20133 Milano,
Italy. {\tt gianmaria.verzini@polimi.it}}}


\begin{document}
\maketitle


\begin{abstract}
We analyze the fundamental solution of a time-fractional problem, establishing existence
and uniqueness in an appropriate functional space.

We also focus on the one-dimensional spatial setting in the case in which the time-fractional
exponent is equal to, or larger than, $\frac12$. In this situation, we prove that
the speed of invasion of the fundamental solution is at least ``almost of square root type'',
namely it is larger than~$ct^\beta$ for any given~$c>0$ and~$\beta\in\left(0,\frac12\right)$.
\end{abstract}
\bigskip

\noindent{\bf Keywords:} {\em
Time fractional diffusion, heat conduction with memory, Caputo fractional derivatives, fundamental solution, asymptotic behavior of solution.}
\medskip

\noindent{\bf Mathematics Subject Classification:}
35R11, 35C15, 35B40, 35K57, 35K08, 26A33.
\bigskip

\section{Introduction}

In this note we will consider a parabolic problem
with time-fractional diffusion.
The spatial diffusion will be modeled by the Laplacian operator,
while the time derivative is of fractional type.
In particular, we consider the so-called Caputo
derivative of order~$\alpha\in(0,1)$, with initial time~$t=0$,
given by
\begin{equation}\label{CAPO} \capu u(t):=\frac{1}{\Gamma(1-\alpha)}\int_0^t\frac{\dot u(\tau)}{(t-\tau)^\alpha}\,d\tau.\end{equation}
We consider the evolution problem
\begin{equation}\label{eq:lin_reac_diff}
\begin{cases}
\capu u-d\Delta u=au, \qquad\qquad& x\in\R^N,\ t>0\\
u(x,0)=\delta,
\end{cases}
\end{equation}
where the constants $d,a$ satisfy $d>0$, $a\ge0$, and the $N$-dimensional Dirac delta distribution $\delta$ is centered at
$x = 0$.
\medskip

Time-fractional equations such as the one in~\eqref{eq:lin_reac_diff}
are very intriguing from the pure mathematical point of view,
as they offer a great technical challenge to the classical methods
of ordinary and partial differential equations, since many of
the standard techniques based on explicit barriers, maximum principle
and bootstrap differentiations simply do not work in this setting.
Furthermore, time-fractional diffusion naturally arises in a number
of real-world phenomena in which ``anomalous'' diffusion takes
place in view of a ``memory effect'' which is mathematically
described by the polynomial kernel in~\eqref{CAPO}.

As a matter of fact, the original motivation for the time-fractional operator
in~\eqref{CAPO} arose in~\cite{MR2379269} with the goal of describing
in mathematical terms an initial value problem arising in geophysics, to be confronted
with experimental data.

Other powerful applications of the operator in~\eqref{CAPO}
occur in the theory of viscoelastic fluids: roughly speaking,
in this context one models the viscoelastic effects
as an ideal superposition of ``purely elastic''
terms, governed by Hooke's Law
(relating the force to the displacement), and ``purely viscous'' terms,
governed by Newton's Law (in which
forces are related instead to velocities). Such a superposition
of zero-order effects (due to Hooke's Law) and first-order ones
(due to Newton's Law) are often conveniently modeled in terms
of fractional derivatives, see e.g.
Section~10.2 in~\cite{MR1658022} and the references therein.

Interestingly, this viscoelastic effect and its relation with fractional
calculus can be also explicitly understood in terms of mechanical systems
of springs and dampers, producing the operator in~\eqref{CAPO}
in a rigorous asymptotic way from fundamental physics considerations,
see~\cite{MR1890111} for additional details on these models and on related problems.

Furthermore, fractional diffusive operators as in~\eqref{CAPO}
and time-fractional heat equations as in~\eqref{eq:lin_reac_diff}
also arise in classical models when a complicated or fractal geometry
of the environment comes into play. In particular, the classical
heat equation on a very ramified comb structure naturally leads
to a time-fractional equation, see~\cite{COMB} and~\cite{MR3856678}.
The diffusive properties on highly ramified networks
have also fundamental consequences in neurology, since
this type of anomalous diffusion has been experimentally measured
in neurons, see e.g.~\cite{SANTA} and the references therein.
See~\cite{MR3645874} for a recent review on this and related topics,
and also~\cite{MR3814763} for a concrete mathematical model.

A number of natural applications of fractional derivatives
also occur in statistics, mechanics, engineering and finance, see e.g.~\cite{2017arXiv171011567A}
and~\cite{CARBOTTI}
for a series of concrete examples and further motivations.
We also refer to~\cite{MR2624107}, \cite{MR0444394}, \cite{MR860085},
\cite{MR1251624}, and
\cite{FERR} for historical introductions to the fractional calculus.
\medskip

The results that we obtain in this paper
are the following. First of all, we prove an {\em
existence and uniqueness} theory for the fundamental solution
in~\eqref{eq:lin_reac_diff} in the appropriate functional spaces, in any dimension.
Then, we specialize to the case of dimension~$1$, with fractional exponent
greater or equal than~$\frac12$, and we establish
precise asymptotics for the fundamental solution, in particular
by determining that the ``rate of invasion'' is faster than~$\{x=ct^\beta\}$
for any given~$c>0$ and~$\beta\in\left(0,\frac12\right)$.\medskip

To obtain these results, the precise mathematical framework in which we work is the following.
We consider the Sobolev
space~$H^{m}(\R^N)$
and denote by~$H^{-m}(\R^N)$
the dual of~$H^{m}(\R^N)$. For concreteness,
we will take~$\N\ni m>N/2$ (in this way,
functions in~$H^{m}(\R^N)$ are necessarily
continuous, and accordingly
the Dirac delta belongs to~$H^{-m}(\R^N)$).

Given~$v\in H^{-m}(\R^N)$, we let
$$ \| v\|_{H^{-m}(\R^N)}:= \sup_{ \varphi\in H^{m}(\R^N)\setminus \{0\} }
\frac{\big| \langle v,\varphi\rangle\big|}{\| \varphi\|_{H^{m}(\R^N)}}.$$
Then,
we say that~$v\in L^\infty_\alpha\big([0,T], H^{-m}(\R^N)\big)$ if, for all~$t\in[0,T]$,
we have that~$v(t)\in H^{-m}(\R^N)$ and
\begin{equation}\label{Dedf2}
\sup_{t\in[0,T]} t^\alpha \| v(t)\|_{H^{-m}(\R^N)}<+\infty.
\end{equation}
We also recall that, for all~$\alpha\in(0,1)$, the time-fractional equation
$$ \partial^\alpha_t u(t)=f(t)$$
can be reduced to the Volterra integral equation
\begin{equation}\label{VOLTERRA}
u(t)=u(0)+\frac{1}{\Gamma(\alpha)}\int_0^t \frac{f(\tau)}{(t-\tau)^{1-\alpha}}\,d\tau ,\end{equation}
see e.g.~\cite[Lemma 6.2]{di}.
Therefore, we define~$u$ to be a distributional
solution (or, briefly, a solution) of~\eqref{eq:lin_reac_diff}
if
\begin{equation*}
u\in L^\infty_\alpha\big([0,T], H^{-m}(\R^N)\big)\end{equation*} and
\begin{equation}\label{DEdebo}
\langle u(t),\varphi\rangle=\varphi(0)+\frac{1}{\Gamma(\alpha)}\int_0^t \frac{
d\langle u(\tau),\Delta\varphi \rangle+a\langle u(\tau),\varphi\rangle
}{(t-\tau)^{1-\alpha}}\,d\tau,
\end{equation}
for all~$\varphi\in C^\infty_0(\R^N)$.\medskip

In this setting, we have that

\begin{theorem}\label{EXUN}
There exists a unique solution of~\eqref{eq:lin_reac_diff}.
\end{theorem}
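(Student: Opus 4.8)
The plan is to exploit the fact that the spatial operator is the Laplacian on all of~$\R^N$, so that the Fourier transform~$\fou$ in the space variable diagonalizes the problem. Writing $\hat u(\xi,t):=\fou[u(\cdot,t)](\xi)$ and recalling that $\fou[\Delta\varphi](\xi)=-|\xi|^2\hat\varphi(\xi)$ and that $\fou[\delta]$ is constant, problem~\eqref{eq:lin_reac_diff} formally becomes, for each frequency $\xi\in\R^N$, the scalar Caputo equation $\capu \hat u(\xi,t)=(a-d|\xi|^2)\,\hat u(\xi,t)$ with $\hat u(\xi,0)=1$. By the Volterra reformulation~\eqref{VOLTERRA}, this is solved explicitly by the Mittag-Leffler function, namely $\hat u(\xi,t)=E_\alpha\big((a-d|\xi|^2)t^\alpha\big)$, where $E_\alpha(z):=\sum_{k\ge 0} z^k/\Gamma(\alpha k+1)$. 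I would therefore take as candidate
$$u(x,t):=\fou^{-1}\Big[\,\xi\mapsto E_\alpha\big((a-d|\xi|^2)t^\alpha\big)\,\Big](x).$$

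For existence, I would first check that this candidate lies in $L^\infty_\alpha\big([0,T],H^{-m}(\R^N)\big)$. By Plancherel and Cauchy--Schwarz against the weight $(1+|\xi|^2)^m$,
$$\|u(t)\|_{H^{-m}(\R^N)}^2\le \int_{\R^N}\frac{\big|E_\alpha\big((a-d|\xi|^2)t^\alpha\big)\big|^2}{(1+|\xi|^2)^m}\,d\xi.$$
I would then split the integral into the bounded low-frequency region $\{d|\xi|^2\le a\}$, where the Mittag-Leffler argument is non-negative and bounded by $aT^\alpha$ so that $|E_\alpha|\le E_\alpha(aT^\alpha)$, and the high-frequency region $\{d|\xi|^2>a\}$, where the argument is negative and I would invoke the decay estimate $|E_\alpha(-y)|\le C/(1+y)$ for $y\ge0$. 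Since the low-frequency region has finite measure and, in the high-frequency region, $\int_{\R^N}(1+|\xi|^2)^{-m}\,d\xi<+\infty$ precisely because $m>N/2$, both pieces are finite and bounded uniformly in $t\in[0,T]$, which yields~\eqref{Dedf2}. It then remains to verify~\eqref{DEdebo}: for each fixed $\xi$ the Mittag-Leffler function satisfies the scalar Volterra identity $\hat u(\xi,t)=1+\tfrac{1}{\Gamma(\alpha)}\int_0^t (t-\tau)^{\alpha-1}(a-d|\xi|^2)\hat u(\xi,\tau)\,d\tau$, and integrating this against $\hat\varphi$ and applying Fubini (legitimate thanks to the bounds just obtained) reproduces~\eqref{DEdebo}, using $\fou[\Delta\varphi]=-|\xi|^2\hat\varphi$ and Fourier inversion at the origin.

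For uniqueness, I would let $w:=u_1-u_2$ be the difference of two solutions; subtracting, the inhomogeneous term $\varphi(0)$ cancels and $w$ satisfies the homogeneous version of~\eqref{DEdebo}. Taking the Fourier transform and using that the identity holds for all $\varphi\in C^\infty_0(\R^N)$ (so that the corresponding $\hat\varphi$ range over a dense class), I would deduce that, for almost every $\xi$ and every $t$,
$$\hat w(\xi,t)=\frac{\lambda_\xi}{\Gamma(\alpha)}\int_0^t (t-\tau)^{\alpha-1}\,\hat w(\xi,\tau)\,d\tau,\qquad \lambda_\xi:=a-d|\xi|^2.$$
This is a homogeneous linear Volterra equation with weakly singular kernel, and the $L^\infty_\alpha$ bound on~$w$ provides enough control near $t=0$ to run a standard Picard iteration, which produces the vanishing factor $\big(|\lambda_\xi|\,t^\alpha\big)^n/\Gamma(n\alpha+1)$ and forces $\hat w(\xi,\cdot)\equiv0$ for a.e.\ $\xi$. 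Hence $w\equiv0$.

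The main obstacle I expect is technical rather than conceptual: one cannot set up a naive contraction argument directly in $H^{-m}(\R^N)$, because the term $\langle u(\tau),\Delta\varphi\rangle$ costs two spatial derivatives and only controls the output in $H^{-m-2}(\R^N)$, so the solution map does not preserve the space. The Fourier approach circumvents this precisely because the Mittag-Leffler decay $1/(1+d|\xi|^2t^\alpha)$ compensates the factor $|\xi|^2$ frequency by frequency. The delicate points are thus the uniform Mittag-Leffler estimates (in particular controlling the genuine growth of $E_\alpha$ on the low-frequency region when $a>0$), the justification of Fubini and of the passage from the distributional identity~\eqref{DEdebo} to the per-frequency Volterra equation in the $H^{-m}(\R^N)$ setting, and checking that the weighted a priori bound~\eqref{Dedf2} is strong enough to start the uniqueness iteration.
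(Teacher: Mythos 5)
Your existence argument is essentially the paper's own (Lemma~\ref{lem:fund_sol}): the same Mittag--Leffler candidate \eqref{MIT}, and the same two checks, namely membership in $L^\infty_\alpha\big([0,T],H^{-m}(\R^N)\big)$ by splitting low and high frequencies, and then the Volterra identity \eqref{DEdebo} via the scalar relation $E_\alpha(\lambda t^\alpha)=1+\frac{\lambda}{\Gamma(\alpha)}\int_0^t(t-\tau)^{\alpha-1}E_\alpha(\lambda\tau^\alpha)\,d\tau$ plus Fubini. The only cosmetic difference is that you bound $|E_\alpha(-y)|\le C/(1+y)$ uniformly, which needs $m>N/2$, while the paper exploits the $|\xi|^{-2}$ decay together with a $t^{-\alpha}$ weight so that $m>(N-4)/2$ would suffice; since $m>N/2$ is the standing assumption anyway, this part of your proposal is sound.

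The uniqueness half contains a genuine gap. You descend to a per-frequency Volterra equation and run a Picard iteration at each fixed $\xi$, asserting that ``the $L^\infty_\alpha$ bound on $w$ provides enough control near $t=0$'' to start it. But membership in $L^\infty_\alpha\big([0,T],H^{-m}(\R^N)\big)$ gives only
\begin{equation*}
\int_{\R^N}(1+|\xi|^2)^{-m}\,|\hat w(\xi,t)|^2\,d\xi\le C^2\,t^{-2\alpha},
\end{equation*}
an $L^2$-in-$\xi$ bound whose exceptional null sets depend on $t$. This does \emph{not} yield, for a.e.\ fixed $\xi$, a majorant of the form $|\hat w(\xi,t)|\le M_\xi\, t^{-\alpha}$ valid for all $t$: a ``moving bump'' in frequency (height tending to infinity, support shrinking, location varying with $t$) is perfectly compatible with the displayed bound, yet admits no such pointwise-in-$\xi$ majorant. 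So the iteration that is supposed to produce the factor $\big(|\lambda_\xi|t^\alpha\big)^n/\Gamma(n\alpha+1)$ has no licit starting point. Relatedly, your statement ``for almost every $\xi$ and every $t$'' needs a joint-measurability/Fubini argument, since the null set on which the pointwise identity fails depends a priori on $t$; you flag this as delicate but do not resolve it, and it is precisely where the difficulty sits. The repair is to \emph{not} fix $\xi$: restrict to a ball $B_R$, where the multiplier $a-d|\xi|^2$ is bounded, take $L^2(B_R)$-norms of the Volterra identity at each fixed $t$ (Minkowski's integral inequality), and iterate on the scalar quantity $t^\alpha\|\hat w(\cdot,t)\|_{L^2(B_R)}$, which the a priori bound \emph{does} control; then let $R\to\infty$. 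This is, in essence, exactly what the paper does in Lemma~\ref{wun} and Corollary~\ref{7:92}: it never evaluates the Fourier transform pointwise in $\xi$, but works with pairings $\langle S(t),\psi\rangle$ against $\psi\in C^\infty_0(B_R)$ (so that multiplication by $a-4\pi^2d|x|^2$ is a bounded operation in the relevant norms), iterates on the localized weighted dual norm $\sigma(t)$, and then runs a continuation-in-time argument to reach all of $[0,T]$. Your fixed-$\xi$ route could also be salvaged --- one can show that a.e.-finiteness of the fractional integral forces $\hat w(\xi,\cdot)\in L^1_{\loc}$ and then iterate in $L^1$ --- but that is exactly the justification your proposal defers, not a detail.
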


Without the reaction term, i.e., when~$a=0$ in~\eqref{eq:lin_reac_diff},
the fundamental solution of the fractional heat equation, with an initial datum in a suitable Lebesgue space, has been studied
in detail in~\cite{MR1829592}.

Interestingly,
one can give an explicit representation
of the solution of~\eqref{eq:lin_reac_diff} in terms
of special functions. To this end,
it is convenient to exploit the Mittag-Leffler function
\begin{equation}\label{eq:defML}
E_{\alpha}(r):=\sum_{k=0}^{\infty} \frac{r^k}{\Gamma(1+k\alpha)}.
\end{equation}
Although the above series converges in more general situations,
we will mainly deal with the cases
$\alpha\in(0,1]$, $r\in\R$.
In this setting, the fundamental solution provided by Theorem~\ref{EXUN}
can be written in the form
\begin{equation}\label{MIT}
u(x,t) :={\mathcal{F}}^{-1} \Big(E_{\alpha}\big((a-4\pi^2
d|\xi|^{2})t^{\alpha}\big)\Big),
\end{equation}
where~${\mathcal{F}}$ denotes the Fourier Transform and~${\mathcal{F}}^{-1}$
its inverse.

We observe that while it is somehow straightforward to ``guess'' that~\eqref{MIT}
is ``the'' solution of~\eqref{eq:lin_reac_diff} at a ``formal'' level of
linear calculations, some care is needed
to establish a coherent existence and uniqueness theory in the appropriate
functional spaces, and this is indeed the core of Theorem~\ref{EXUN}. In particular,
a powerful tool which can not be applied in a direct way here is the abstract theory of uniqueness and correctness classes for Cauchy
problems in topological vector spaces, as developed by Gel'fand and Shilov \cite{MR0435833}.
Indeed, such theory strongly relies on the validity of the Leibniz rule for the differentiation of a product (or, better, of a duality pairing), which fails in the time-fractional case.

Furthermore, the representation formula in~\eqref{MIT} reveals an important
structural difference with respect to the classical case. Indeed,
for the standard heat equation, the Fourier Transform of the fundamental
solution is a Gaussian function, thus possessing nice smoothness and decaying
properties. Instead, formula~\eqref{MIT} highlights
the memory effect introduced by the time-fractional operator, which
produces in this setting a significant loss of regularity in terms
of functional spaces. As a matter of fact, from~\eqref{MIT}
and some well-established properties of the Mittag-Leffler function (see e.g.
formula~\eqref{eq:ML_asynt-infty} here below),
it follows that
\begin{equation}\label{LPQUA0} {\mathcal{F}}u(\xi,t)\simeq \frac{1}{\Gamma(1-\alpha)\;
(4\pi^2d|\xi|^{2}-a)t^{\alpha}}\qquad{\mbox{ as }}|\xi|\to+\infty.\end{equation}
Accordingly, for a given~$t>0$, we have that
\begin{equation}\label{LPQUA} {\mathcal{F}}u(\cdot,t)\in L^p(\R^N) \qquad{\mbox{ if and only if }}\qquad
p\in\left(\frac{N}2,+\infty\right),\end{equation}
which is an important difference with the classical case in which the fast
decay at infinity implies integrability of any order.

Furthermore, from~\eqref{LPQUA} it follows that when~$N=1$
we have that~${\mathcal{F}}u(\cdot,t)\in L^1(\R)\cap L^2(\R)$.
Instead, when~$N\in\{2,3\}$, we have that~${\mathcal{F}}u(\cdot,t)\in L^2(\R^N)\setminus L^1(\R^N)$,
and, when~$N\ge4$, we have that~${\mathcal{F}}u(\cdot,t)\not\in L^2(\R^N)$. In particular,
if~$N\ge4$, it is not even clear whether $u$ belongs to some Lebesgue space, or it is merely a
tempered distribution.
{F}rom~\eqref{LPQUA0}, one also sees that
$$ {\mbox{ if $N\in\{2,3\}$ then $u(0,t)=+\infty$ for all~$t>0$,}}$$
which is also an important difference with respect to the classical case:
namely, differently from the standard diffusion equation, in higher dimension the memory effect
of equation~\eqref{eq:lin_reac_diff} {\em persists at the origin for all times},
preventing any decay whatsoever from the initial singularity. Notably, this is true also when the reaction term is not present, i.e. $a=0$.
\medskip

The next question that we want to address is related to the ``speed of
invasion'' of the fundamental solution. Namely, if~$u$ is as given by Theorem~\ref{EXUN},
one can compute the values of~$u$ on invading spheres depending on time
and determine the asymptotics of these values for large time.
Concretely,
one can consider an increasing function~$\Theta(t)$ and
look at the fundamental solution~$u$
at~$|x|=\Theta(t)$. Roughly speaking, one expects that if~$\Theta$ is ``small enough''
(that is, one moves along ``sufficiently slow'' spheres) then the values of~$u$
will be largely affected by the ``infinite'' initial datum at the origin,
and hence one expects that, in this case,
\begin{equation}\label{TT1} \lim_{t\to+\infty} u( \Theta(t) e,t)=+\infty,\end{equation}
for a given direction~$e\in {\mathbb{S}}^{N-1}$.
Viceversa, if~$\Theta$ is ``large enough''
(that is, one moves along ``sufficiently fast'' spheres) then the values of~$u$
will rapidly drift away from the initial datum at the origin
and quickly approach the datum at infinity: in this case, one expects that
\begin{equation}\label{TT2} \lim_{t\to+\infty} u( \Theta(t) e,t)=0.\end{equation}
Understanding the ``optimal'' velocity~$\Theta$
which produces the switch between the diverging behavior in~\eqref{TT1}
and the vanishing one in~\eqref{TT2} is clearly an important question both
in view of the development of the theory and for potential applications.
Since the general situation of equation~\eqref{eq:lin_reac_diff}
is likely to be extremely rich in complications, we focus here on the special
case of spatial dimension equal to~$1$ and fractional parameter~$\alpha$
bigger or equal than~$\frac12$. In this case, we establish that
the divergent behavior in~\eqref{TT1} occurs for all functions~$\Theta$
that evolve slower than the square root of time. The precise result that we obtain is the following:

\begin{theorem}\label{prop:N=1}
Let~$N=1$, $\alpha\in\left[\frac12,1\right)$ and~$a>0$.
Let~$u$ be as in Theorem~\ref{EXUN}.
Let also~$\beta\in\left(0,\frac12
\right)$ and~$c>0$.
Then,
\[
u(ct^\beta,t) \ge c\alpha t^{\beta - 2\alpha}
E_\alpha\left(t^{\alpha}\right)\left[1 -
o(1)\right].
\]
In particular,
\begin{equation}\label{DIVE} u(ct^\beta,t)
\to+\infty\qquad\text{ as $t\to+\infty$}.\end{equation}
\end{theorem}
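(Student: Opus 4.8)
The plan is to bypass the oscillatory Fourier integral in~\eqref{MIT} and argue instead from a manifestly nonnegative subordination representation of $u$, which turns the desired lower bound into an elementary estimate. The starting point is the Laplace-transform identity $E_\alpha(z)=\int_0^\infty e^{zr}M_\alpha(r)\,dr$, valid for all $z\in\C$, where $M_\alpha$ is the M-Wright (Mainardi) function: it is nonnegative, has unit integral over $(0,\infty)$, and decays super-exponentially, like $\exp(-c\,r^{1/(1-\alpha)})$. Applying this with $z=(a-4\pi^2 d|\xi|^2)t^\alpha$ inside~\eqref{MIT}, the symbol splits as $e^{art^\alpha}\,e^{-4\pi^2 d\,rt^\alpha|\xi|^2}$, and since $N=1$ the inverse Fourier transform of the Gaussian factor in $\xi$ gives
\begin{equation*}
u(x,t)=\int_0^\infty M_\alpha(r)\,\frac{e^{a r t^\alpha}}{\sqrt{4\pi d\,r t^\alpha}}\,\exp\!\left(-\frac{x^2}{4d\,r t^\alpha}\right)dr .
\end{equation*}
Every factor of the integrand is positive, so the representation is tailor-made for lower bounds, and $\int_0^\infty M_\alpha(r)\,e^{art^\alpha}\,dr=E_\alpha(at^\alpha)$ is exactly the total mass.

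Next I would set $x=ct^\beta$ and identify the relevant scale. The mass of $r\mapsto M_\alpha(r)\,e^{art^\alpha}$ concentrates, as $t\to+\infty$, around $r\sim t^{1-\alpha}$, the scale that produces $E_\alpha(at^\alpha)\sim\frac1\alpha e^{a^{1/\alpha}t}$. There the Gaussian exponent equals $\frac{c^2t^{2\beta-\alpha}}{4d\,r}\sim\frac{c^2}{4d}\,t^{2\beta-1}$, which vanishes as $t\to+\infty$ precisely because $\beta<\frac12$. Restricting the integral to $r\ge\kappa\,t^{1-\alpha}$ for a small fixed $\kappa>0$, the Gaussian factor is bounded below by $\exp(-\frac{c^2}{4d\kappa}t^{2\beta-1})=1-o(1)$ uniformly on the retained region, while the discarded part $r<\kappa t^{1-\alpha}$ carries only an $o(1)$ fraction of $E_\alpha(at^\alpha)$ because the concentration peak is narrow relative to its location $t^{1-\alpha}$. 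Collecting the explicit powers of $t$ from $\sqrt{4\pi d\,rt^\alpha}$ on this region then yields a lower bound of the announced form $c\alpha\,t^{\beta-2\alpha}E_\alpha(at^\alpha)\,[1-o(1)]$; the hypothesis $\alpha\ge\frac12$ enters here to ensure that this clean polynomial prefactor $t^{\beta-2\alpha}$ stays below the sharp diffusive rate $t^{-1/2}$ for every $\beta\in(0,\frac12)$, so that the inequality is a legitimate (if non-sharp) lower bound.

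The divergence~\eqref{DIVE} is then immediate: by the asymptotics of the Mittag-Leffler function on the positive real axis, $E_\alpha(at^\alpha)\sim\frac1\alpha e^{a^{1/\alpha}t}\to+\infty$, and the negative power $t^{\beta-2\alpha}$ cannot offset this exponential growth, so $u(ct^\beta,t)\to+\infty$.

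I expect the principal difficulty to lie in justifying the subordination representation rigorously rather than merely formally. One must verify that the scalar identity $E_\alpha(z)=\int_0^\infty e^{zr}M_\alpha(r)\,dr$ may be inserted under $\mathcal{F}^{-1}$ and that the resulting double integral both converges and coincides with the distributional solution delivered by Theorem~\ref{EXUN}; the interchange is delicate precisely because the reaction factor $e^{art^\alpha}$ grows in $r$ and is tamed only by the super-exponential decay of $M_\alpha$. The second, quantitative, difficulty is the uniform control of the Gaussian factor across the concentration region, together with a matching lower bound for $\int_0^\infty r^{-1/2}M_\alpha(r)e^{art^\alpha}\,dr$ in terms of $E_\alpha(at^\alpha)$; this is the step in which the two standing assumptions $\beta<\frac12$ and $\alpha\ge\frac12$ are genuinely used, and where the precise constant and the $[1-o(1)]$ error have to be extracted.
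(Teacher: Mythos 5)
Your route is genuinely different from the paper's proof, which never leaves the oscillatory integral: there, \eqref{la u} is split into an alternating series, the Leibniz criterion reduces everything to $u>a_0-a_1$, and these two terms are estimated via the Mittag--Leffler inequalities of Lemmas~\ref{lem:MLda sopra} and~\ref{lem:MLda sotto} (that is where $\alpha\ge\tfrac12$ enters, through monotonicity of the Gamma function). Your starting point is correct and rigorously justifiable: the Laplace identity $E_\alpha(z)=\int_0^\infty e^{zr}M_\alpha(r)\,dr$ does hold for all $z\in\C$ because $M_\alpha$ decays super-exponentially, and since for $N=1$ the symbol in \eqref{MIT} is integrable in $\xi$, Fubini legitimately gives
\begin{equation*}
u(x,t)=\int_0^\infty M_\alpha(r)\,\frac{e^{art^\alpha}}{\sqrt{4\pi d\,rt^\alpha}}\,\exp\left(-\frac{x^2}{4d\,rt^\alpha}\right)dr,
\end{equation*}
so positivity of the integrand replaces the paper's entire cancellation analysis.

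There is, however, a genuine gap exactly at the step you call ``collecting the explicit powers of $t$''. On your one-sided region $r\ge\kappa t^{1-\alpha}$ the Gaussian factor is indeed $1-o(1)$ (it is increasing in $r$), but the prefactor $(4\pi d\,rt^\alpha)^{-1/2}$ is \emph{decreasing} in $r$ and has infimum $0$ on that unbounded region, so no bound of the form $C t^{-1/2}$ can be pulled out of the integral from the lower cutoff alone; the inequality you actually need, $\int_{\kappa t^{1-\alpha}}^{\infty}M_\alpha(r)e^{art^\alpha}(rt^\alpha)^{-1/2}dr\ge C\,t^{-1/2}E_\alpha(at^\alpha)$, is precisely what remains unproved --- you flag it as a ``difficulty'' in your last paragraph, but it is the heart of the argument, not a technicality. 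Two standard repairs: (i) work on a two-sided window $\kappa t^{1-\alpha}\le r\le K t^{1-\alpha}$, where the prefactor is at least $(4\pi dKt)^{-1/2}$, and show both tails carry $o(1)$ of the mass $E_\alpha(at^\alpha)$: the lower tail by boundedness of $M_\alpha$ (its contribution is at most $Ct^{1-\alpha}e^{a\kappa t}=o\big(e^{a^{1/\alpha}t}\big)$ for $\kappa$ small), the upper tail by using the decay $M_\alpha(r)\le Ce^{-Br^{1/(1-\alpha)}}$ quantitatively, since the exponent $art^\alpha-Br^{1/(1-\alpha)}$ peaks at $r\asymp t^{1-\alpha}$ with value $a^{1/\alpha}t$ and falls strictly below it once $r\ge Kt^{1-\alpha}$ with $K$ large; or (ii) keep only the lower cutoff and apply Jensen's inequality to the convex map $r\mapsto r^{-1/2}$ under the probability measure proportional to $M_\alpha(r)e^{art^\alpha}dr$ on $[\kappa t^{1-\alpha},\infty)$, which reduces everything to $\int_0^\infty rM_\alpha(r)e^{art^\alpha}dr=E_\alpha'(at^\alpha)$ and to the ratio $E_\alpha'(at^\alpha)/E_\alpha(at^\alpha)\asymp t^{1-\alpha}$, available from \eqref{eq:der_alfalfa} together with the analogue of \eqref{eq:ML_asynt+infty} for $E_{\alpha,\alpha}$. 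With either repair your argument closes, and it actually proves more than Theorem~\ref{prop:N=1}: it yields $u(ct^\beta,t)\ge C\,t^{-1/2}E_\alpha(at^\alpha)\,[1-o(1)]$ for \emph{every} $\alpha\in(0,1)$, the hypothesis $\alpha\ge\tfrac12$ being used only to convert this into the stated prefactor via $t^{\beta-2\alpha}\le t^{-1/2}$; in particular the completed argument would settle part of the paper's open problem on the regime $\alpha\in\left(0,\tfrac12\right)$.
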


The proof of Theorem~\ref{prop:N=1} is delicate and
will be based on a suitable analysis
exploiting some fine properties of the Mittag-Leffler function
and a number of ad-hoc simplifications to take care of some wildly oscillatory behaviors
of the terms describing the fundamental solution in terms of space-time power series.\medskip

Theorem~\ref{prop:N=1} is of clear interest in itself,
since it reveals an important physical feature of the ``subdiffusion''
provided by the time-fractional operator: namely, if the memory effect of the fractional
derivative is expected to ``slow down'' the invasion with respect to the classical case, in which the diffusion occurs with a linear speed, our result establishes that such an invasion still occurs,
in a power-like time as close as we wish to the square root function. Furthermore,
Theorem~\ref{prop:N=1} discloses a number
of very interesting research directions. A
few natural ones  are concerned with the  ranges of $\alpha$ and $\beta$
and with the spatial dimensions $N\geq 2$; we describe them as follows:

\begin{open} {\rm It would be desirable to find the {\em``optimal speed''}
distinguishing the diverging from the vanishing behavior in Theorem~\ref{prop:N=1},
and in general to {\em determine the asymptotics in the case~$\beta\ge\frac12$}.
With respect to this, we mention that the estimate found
in Theorem~\ref{prop:N=1} is valid somehow independently on~$\alpha$,
and in a way which treats the parameters~$\alpha$ and~$\beta$
in an essentially uncorrelated way. Nevertheless,
the classical case corresponding to~$\alpha=1$ would correspond to a linear
velocity of invasion (that is, $\beta=1$), and Theorem~\ref{prop:N=1} does
not capture this limit feature. In this sense,
it would be desirable to investigate whether it is possible to obtain
results such as in Theorem~\ref{prop:N=1} with parameters~$\alpha$
and~$\beta$ in a clear and explicit correlation that, on the one hand,
underlines the memory effect of the subdiffusion process, and, on the other hand,
{\em recovers the classical linear speed of invasion} in the limit as~$\alpha\nearrow1$.
In general,
when~$\beta\ge\frac12$, the asymptotic
and cancellation properties of the fundamental solution
are likely to be different than the ones discussed in this paper and a new approach
has probably to be taken into account.
}\end{open}

\begin{open}{\rm It would be interesting to obtain a result in the spirit
of Theorem~\ref{prop:N=1} for
the {\em highly nonlocal regime~$\alpha\in\left(0,\frac12\right)$}.
In this regime, the asymptotics of the Mittag-Leffler function
are different than those considered in this paper and thus new ingredients
have to be taken into consideration.
}\end{open}

\begin{open}{\rm It would be desirable to understand the asymptotics
of the fundamental solution in {\em every spatial dimension~$N\ge2$}.
In this case, additional terms related to the rotational behavior of the Fourier Transform
have to be taken into account. It is possible that in this case
the additional factors have to be understood in terms of special functions,
such as the the zeroth order Bessel function~$J_0$, and analyzed
in view of analytic tools such as the
Hankel Transform. A number of technical difficulties are expected to surface
in this case, since several useful identities involving the Mittag-Leffler function
and its derivatives would be affected by a possible sign change in their arguments,
leading to different types of cancellations.
}\end{open}

Theorem~\ref{prop:N=1} is also related
to a number of results in the recent literature dealing with the asymptotics
of nonlocal heat equation and of possibly nonlinear fractional equations.
With respect to this point, we observe that the nonlinear setting in the nonlocal
case happens to be better understood in the case of space-fractional,
rather than time-fractional, equations: for instance, in~\cite{MR2990897},
\cite{MR3057187} and~\cite{MR3698163}
a very detailed long time asymptotics is given for nonlinear
space-fractional equations. Conversely, the case of time-fractional equations
seems to be more difficult to consider, since the memory effect
given by the Caputo derivative does not often permit a direct use of barriers
and maximum principles in nonlinear scenarios. For time-fractional
equations with
large time decay, several results have been recently obtained in~\cite{MR2047909},
\cite{MR2125407},
\cite{MR2538276}, \cite{MR2718161}, \cite{MR2729240}, \cite{MR2872116},
\cite{MR3081208},
\cite{MR3156624}, \cite{MR3296607},
\cite{MR3563229}, \cite{MR3624548}, \cite{MR3631303}, \cite{2017arXiv170708278D},
\cite{MR3912710} and~\cite{MR3917402}.
Differently from the previous literature, in this paper we consider a
reaction term, namely, the term~$au$ on the right hand side of~\eqref{eq:lin_reac_diff}.
This term deeply affects the analysis of the problem,
since when~$a>0$ the arguments of the Mittag-Leffler functions
involved in the computations can become positive, thus exhibiting an
exponential behavior for large times.

Of course, this sign change is not only a merely technical occurrence,
but it is indeed responsible of the diverging structure described
in formula~\eqref{DIVE}. Indeed, while in the existing literature
the time-fractional heat equations were studied without a reaction term,
obtaining decay results for large times, Theorem~\ref{prop:N=1}
here provides the first result of divergence for large times,
in view of the positive reaction term when~$a>0$.\medskip

The rest of this article is organized as follows.
Section~\ref{DIE} is devoted to the proof of Theorem~\ref{EXUN}.
First, we will focus on the uniqueness result in Theorem~\ref{EXUN},
which will be the consequence of a series of general integral estimates.
For this, we will combine suitable methodologies of ordinary differential equations,
integral equations, and distribution theory.
Then, the existence result of Theorem~\ref{EXUN} will be proved
by explicitly checking that the setting in~\eqref{MIT} provides a solution of this problem.
This part is also not straightforward, since one has to check that
the definition of~\eqref{MIT} is compatible with the notion of solution
and that some of the ``formal'' computations that one would like to perform
are indeed well-justified in a functional analysis framework.

In Section~\ref{TRE} we recall some useful properties of the
Mittag-Leffler function. They will be exploited in Section~\ref{QUAT},
where we prove Theorem~\ref{prop:N=1}. For this, we will need to
carefully analyze the cancellations occurring in the Fourier Transform
representation of the fundamental solution. These cancellations
are dictated by the oscillatory kernel of the Fourier Transform
and one of the key ingredients of our analysis will be to discover
an alternate cancellation structure similar to that of the ``Leibniz Criterion''
for numerical series.

\section{The fundamental solution of the reaction-diffusion equation}\label{DIE}

In this section,
we discuss the existence and uniqueness
of the fundamental solution of~\eqref{eq:lin_reac_diff}.

The main idea is that such a fundamental solution can be written
explicitly in terms of the Mittag-Leffler function (recall~\eqref{MIT}). This explicit representation
will be valid in any dimension. Moreover, the fundamental solution
turns out to be unique in the sense of distributions.
The technical computations needed to check uniqueness
are contained in Subsection~\ref{UNIQ}, while
the existence and explicit representation of the solution are discussed
in Subsection~\ref{EXIS}.
In our setting, Theorem~\ref{EXUN} will follow from the subsequent
results in Corollary~\ref{7:92} and Lemma~\ref{lem:fund_sol}.

\subsection{The fundamental solution of the reaction-diffusion equation:
uniqueness of the solution}\label{UNIQ}

We discuss now the uniqueness result claimed in Theorem~\ref{EXUN}.
To this end, we establish first a pivotal result, which will
then be applied to the difference of two possible solutions.

\begin{lemma}\label{wun}
Let~$w\in C^\infty(\R^n)$.
Let~$T>0$ and
\begin{equation}\label{v as}
v\in L^\infty_\alpha\big([0,T], H^{-m}(\R^N)\big)\end{equation}
be such that, for all~$\varphi\in C^\infty_0(\R^N)$ and~$t\in[0,T]$,
$$ \langle S(t),\varphi\rangle=
\frac{1}{\Gamma(\alpha)}\int_0^t \frac{\left\langle S(\tau),
w\varphi
\right\rangle}{(t-\tau)^{1-\alpha}}\,d\tau ,$$
where~$S$ is the Fourier Transform of~$v$.
Then~$v(t)=0$ for all~$t\in[0,T]$.
\end{lemma}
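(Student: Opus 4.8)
The plan is to turn the hypothesis into a Volterra identity on the Fourier side and to iterate it against a \emph{fixed} test function, exploiting the semigroup property of fractional integration; the gain at each step will be a factor $1/\Gamma(k\alpha)$ that eventually overwhelms everything else.

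First I would set up the duality on the frequency side. Since $v(\tau)\in H^{-m}(\R^N)$, its Fourier transform $S(\tau)=\fou v(\tau)$ is an honest function belonging to $L^2\big((1+|\xi|^2)^{-m}\,d\xi\big)$, and by Plancherel its weighted $L^2$ norm equals $\|v(\tau)\|_{H^{-m}(\R^N)}$ up to the normalization constant of $\fou$. Hence, for any $\psi\in C^\infty_0(\R^N)$,
\[
|\langle S(\tau),\psi\rangle|\le \|v(\tau)\|_{H^{-m}(\R^N)}\Big(\int_{\R^N}|\psi(\xi)|^2(1+|\xi|^2)^m\,d\xi\Big)^{1/2}.
\]
The crucial observation is that, when $\psi=w^k\varphi$ with $\supp\varphi\subset B_R$, this weighted norm only sees $w$ on $B_R$: with $M:=\sup_{B_R}|w|<+\infty$ one gets $\|w^k\varphi\|_{L^2((1+|\xi|^2)^m)}\le M^k\,(1+R^2)^{m/2}\|\varphi\|_{L^2}=:M^kC_\varphi$, with no derivative of $w$ entering. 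This is exactly what will let me sidestep the unboundedness of multiplication by $w$.

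Next I would iterate. For a fixed $\psi$ put $G_\psi(t):=\langle S(t),\psi\rangle$, so the hypothesis reads $G_\psi=I^\alpha G_{w\psi}$, where $I^\alpha h(t)=\frac{1}{\Gamma(\alpha)}\int_0^t(t-\tau)^{\alpha-1}h(\tau)\,d\tau$. Because $w\psi\in C^\infty_0(\R^N)$ again, I can feed the identity into itself. Using the a priori bound $\|v(\tau)\|_{H^{-m}}\le K\tau^{-\alpha}$, with $K:=\sup_{[0,T]}\tau^\alpha\|v(\tau)\|_{H^{-m}}<+\infty$ coming from \eqref{Dedf2}, to guarantee the absolute integrability required by Fubini, the semigroup property $I^\alpha I^\alpha=I^{2\alpha}$ propagates and yields, for every $k\in\N$,
\[
\langle S(t),\varphi\rangle=\big(I^{k\alpha}G_{w^k\varphi}\big)(t)=\frac{1}{\Gamma(k\alpha)}\int_0^t(t-\tau)^{k\alpha-1}\langle S(\tau),w^k\varphi\rangle\,d\tau.
\]
Inserting the two bounds above and evaluating the Beta integral $\int_0^t(t-\tau)^{k\alpha-1}\tau^{-\alpha}\,d\tau=t^{(k-1)\alpha}\,\Gamma(k\alpha)\Gamma(1-\alpha)/\Gamma((k-1)\alpha+1)$ leads to
\[
|\langle S(t),\varphi\rangle|\le K\,C_\varphi\,\Gamma(1-\alpha)\,\frac{M^{k}\,t^{(k-1)\alpha}}{\Gamma\big((k-1)\alpha+1\big)}.
\]

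Finally I would fix $t$ and send $k\to\infty$: the numerator is at most geometric in $k$ (it equals $t^{-\alpha}(Mt^\alpha)^k$ up to a constant), while by Stirling the denominator $\Gamma((k-1)\alpha+1)$ grows super-exponentially, so the right-hand side tends to $0$. As the left-hand side does not depend on $k$, it must vanish; since $\varphi\in C^\infty_0(\R^N)$ is arbitrary, $S(t)=0$ and therefore $v(t)=0$ for all $t\in[0,T]$. The hard part, and the reason the lemma is phrased for a general smooth (hence possibly unbounded) $w$, is that in the application $w(\xi)=a-4\pi^2d|\xi|^2$ is a degree-two polynomial, so multiplication by $w$ is \emph{not} bounded on $H^m$ and a naive Gronwall estimate on $\|v(t)\|_{H^{-m}}$ cannot be closed. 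Freezing $\varphi$ before iterating is precisely what defuses this, replacing the unbounded multiplier by the harmless geometric constant $M^k$; the only other delicate point is justifying the iterated semigroup identity near the singularity at $\tau=0$, which the weighted bound from \eqref{Dedf2} supplies.
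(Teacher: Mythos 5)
Your proof is correct, and it takes a genuinely different route from the paper's. The two arguments share the decisive preliminary observation: testing only against $\varphi$ supported in a fixed ball $B_R$ makes the multiplier $w$ (in the application, the unbounded polynomial $a-4\pi^2 d|\xi|^2$) enter only through $M=\sup_{B_R}|w|$, via the duality $\langle S(\tau),\psi\rangle=\langle v(\tau),\hat\psi\rangle$ and the Fourier-side computation of the $H^{m}$ norm; the paper's version of your weighted bound is the chain of identities yielding $\|{\mathcal{F}}(w\phi)\|_{H^{m}(\R^N)}\le \|w\|_{L^\infty(B_R)}\,\|{\mathcal{F}}\phi\|_{H^{m}(\R^N)}$. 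From there the strategies diverge. The paper argues by contraction plus continuation: it introduces $\sigma(t)$, essentially the supremum over $\tau\in[0,t]$ and $\psi\in C^\infty_0(B_R)\setminus\{0\}$ of $\tau^{\alpha}\big|\langle S(\tau),\psi\rangle\big|/\|\hat\psi\|_{H^{m}(\R^N)}$, applies the Volterra identity \emph{once} to get a self-improving inequality $\sigma(T_0)\le C(T_0)\,\sigma(T_0)$ with $C(T_0)<1$ for $T_0$ small, deduces $\sigma\equiv 0$ near the origin, and then reaches $T$ through a maximality/contradiction step. You instead iterate the identity infinitely often against the \emph{fixed} $\varphi$, using the semigroup law $I^{k\alpha}I^{\alpha}=I^{(k+1)\alpha}$ (your Fubini justification via the $K\tau^{-\alpha}$ bound from \eqref{Dedf2} is exactly what is needed, since $(t-s)^{(k+1)\alpha-1}s^{-\alpha}$ is integrable), and you win from the factor $1/\Gamma\big((k-1)\alpha+1\big)$, which crushes the geometric factor $(Mt^{\alpha})^{k}$; your Beta-integral evaluation, including the identification $\Gamma(k\alpha+1-\alpha)=\Gamma\big((k-1)\alpha+1\big)$, is exact, and the duality constant is applied only once per $k$, so nothing compounds along the iteration. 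What your route buys: it is global in time in a single stroke, with no smallness of the interval and no continuation argument, and it never touches the auxiliary integral $\Xi(t)=\int_0^t \tau^{-\alpha}(t-\tau)^{\alpha-1}\,d\tau$ that the paper manipulates (an integral which is in fact constant in $t$, equal to $B(1-\alpha,\alpha)$, rather than the $t^{2-2\alpha}$ expression displayed there -- harmless for the contraction step, but it means the paper's continuation step really rests on a direct estimate of $\int_{T_{*}}^{\tilde t}\tau^{-\alpha}(\tilde t-\tau)^{\alpha-1}\,d\tau$, a piece of bookkeeping your argument bypasses entirely). What the paper's route buys: each step invokes the hypothesis only once, with no iterated kernels to control, following the standard Gronwall-type template. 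Your closing deduction (vanishing against every $\varphi\in C^\infty_0(\R^N)$ forces $S(t)=0$, hence $v(t)=0$) coincides with the paper's final density argument.
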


\begin{proof}
We fix~$R\ge1$, and
we take~$\phi\in C^\infty_0( B_R)$.
We see that (up to normalizing constants
that we omit for the sake of simplicity)
\begin{eqnarray*} &&\|{\mathcal{F}}(w\phi)\|_{H^{m}(\R^N)}
=\sum_{|\beta|\le m} \big\| D^\beta\big({\mathcal{F}}(w\phi)\big)\big\|_{L^2(\R^N)}
=\sum_{|\beta|\le m} \big\| {\mathcal{F}}\big(\xi^\beta w\phi\big)\big\|_{L^2(\R^N)}
\\&&\qquad
=\sum_{|\beta|\le m} \big\| \xi^\beta w\phi\big\|_{L^2(\R^N)}\le
\|w\|_{L^\infty(B_R)}\sum_{|\beta|\le m} \big\| \xi^\beta \phi\big\|_{L^2(\R^N)}\\&&\qquad=
\|w\|_{L^\infty(B_R)}\sum_{|\beta|\le m} \big\| {\mathcal{F}}
\big(\xi^\beta \phi\big)\big\|_{L^2(\R^N)}
=
\|w\|_{L^\infty(B_R)}\sum_{|\beta|\le m}\| D^\beta \big({\mathcal{F}}(\phi)\big)\|_{L^2(\R^N)}\\ &&\qquad=
\|w\|_{L^\infty(B_R)}\,\|{\mathcal{F}}(\phi)\|_{H^{m}(\R^N)}.
\end{eqnarray*}

We define
$$ \sigma(t):= \sup_{\tau\in[0,t]} \;
\sup_{\psi\in C^\infty_0(B_R)\setminus\{0\}
}\frac{t^\alpha
\big| \langle S(t),\psi\rangle\big|
}{\|\hat\psi \|_{H^{m}(\R^N)}},$$
where~$\hat\psi={\mathcal{F}}(\psi)$ denotes the Fourier Transform of~$\psi$.

We point out that
$$ \frac{
\big| \langle S(t),\psi\rangle\big|
}{\|\hat\psi \|_{H^{m}(\R^N)}}=\frac{
\big| \langle v(t),\hat\psi\rangle\big|
}{\|\hat\psi \|_{H^{m}(\R^N)}}$$
and thus,
by~\eqref{v as} and~\eqref{Dedf2},
we see that~$\sigma(t)\le C$, for some~$C>0$,
for all~$t\in[0,T]$ (in particular,
$\sigma(t)$ is finite). Moreover, for all~$t\in[0,T]$ and all~$\psi\in C^\infty_0(B_R)$,
$$ \big| \langle S(t),\psi\rangle\big|\le\frac{ \sigma(t)\,\|\hat\psi \|_{H^{m}(\R^N)}}{t^\alpha}.$$
In this way, for all~$T_0\ge0$ and all~$t\in[0,T_0]$, we have that
\begin{eqnarray*}
\big|\langle S(t),\phi\rangle\big|&\le&
\frac{1}{\Gamma(\alpha)}\int_0^t \frac{\left|\left\langle S(\tau),
w\phi
\right\rangle\right|}{\tau^\alpha (t-\tau)^{1-\alpha}}\,d\tau\\
&\le&\frac{1}{\Gamma(\alpha)}\int_0^t \frac{\sigma(\tau)\,
\left\|\widehat{w\phi}\right\|_{H^{m}(\R^N)}}{\tau^\alpha (t-\tau)^{1-\alpha} }\,d\tau\\&\le&
\frac{\|w\|_{L^\infty(B_R)}\,\|\hat\phi\|_{H^{m}(\R^N)}}{\Gamma(\alpha)}\int_0^t \frac{
\sigma(\tau)}{\tau^\alpha (t-\tau)^{1-\alpha} }\,d\tau\\&\le&
\frac{\|w\|_{L^\infty(B_R)}\,\|\hat\phi\|_{H^{m}(\R^N)}\;\sigma(T_0)\;\Xi(T_0)}{\Gamma(\alpha)\;\alpha},
\end{eqnarray*}
for all~$t\in[0,T_0]$,
where
$$ \Xi(t):=\int_0^t \frac{
d\tau}{\tau^\alpha (t-\tau)^{1-\alpha} }=
\frac{4^{\alpha-1}\sqrt{\pi} \;  \Gamma(1 - \alpha) \;t^{2 - 2 \alpha}}{\Gamma(3/2 - \alpha)}.$$
As a consequence, we obtain that
$$ \sigma(T_0)\le
\frac{\|w\|_{L^\infty(B_R)}\;\sigma(T_0)\;T_0^\alpha\;\Xi(T_0)}{\Gamma(\alpha)\;\alpha}.$$
In particular, if~$ T_0$ is sufficiently small,
it follows that~$\sigma(T_0)\le\sigma(T_0)/2$, and therefore~$\sigma(T_0)=0$.

Let now
$$ T_{*}:=\sup\Big\{t\in[0,T]
{\mbox{ s.t. }} \sigma(t)=0  \Big\}.$$
We have just proved that~$T_{*}>0$, and we now claim that
\begin{equation}\label{89TAasggado78}
T_{*}=T.
\end{equation}
For this, suppose by contradiction that~$T_{*}\in(0,T)$.
Then, for all~$\epsilon\in[0,T-T_{*}]$, and all~$t\in[0,T_{*}+\epsilon]$,
we set~$\tilde{t}:=\max\{t,T_{*}\}$ and we have that
\begin{eqnarray*}
\big|\langle S(t),\phi\rangle\big|&\le&
\frac{1}{\Gamma(\alpha)}\int_{T_{*}}^{\tilde{t}} \frac{\left|\left\langle S(\tau),
w\phi
\right\rangle\right|}{(\tilde{t}-\tau)^{1-\alpha}}\,d\tau\\
&\le&
\frac{1}{\Gamma(\alpha)}\int_{T_{*}}^{\tilde{t}} \frac{\sigma(\tau)\,\|
{\mathcal{F}}(
w\phi)\|_{H^{m}(\R^N)}}{\tau^\alpha(\tilde{t}-\tau)^{1-\alpha}}\,d\tau
\\ &\le& \frac{
\sigma(T_{*}+\epsilon)\,\|w\|_{L^\infty(B_R)}\,\|\hat\phi\|_{H^{m}(\R^N)}
}{\Gamma(\alpha)}\int_{T_{*}}^{\tilde{t}} \frac{d\tau
}{\tau^\alpha(\tilde{t}-\tau)^{1-\alpha}}\\
&=& \frac{
\sigma(T_{*}+\epsilon)\,\|w\|_{L^\infty(B_R)}\,\|\hat\phi\|_{H^{m}(\R^N)}
\;\big(
\Xi(\tilde{t})-\Xi(T_{*})\big)
}{\Gamma(\alpha)\;\alpha}
.\end{eqnarray*}
As a consequence,
$$ \sigma(T_{*}+\epsilon)\le\frac{\sigma(T_{*}+\epsilon)\,\|w\|_{L^\infty(B_R)}\,
(T_{*}+\epsilon)^\alpha \;\big(
\Xi(T_{*}+\epsilon)-\Xi(T_{*})\big)}{\Gamma(\alpha)\;\alpha}.$$
In particular, for a suitably small~$\epsilon>0$,
$$ \sigma (T_{*}+\epsilon)\le\frac{ \sigma (T_{*}+\epsilon) }{2},$$
which gives that~$\sigma (T_{*}+\epsilon)=0$.
This contradicts the maximality of~$T_{*}$ and so it proves~\eqref{89TAasggado78}.

In particular, we obtain that~$\langle S(t),\phi\rangle=0$
for all~$t\in[0,T]$, for all~$\phi\in C^\infty_0(B_R)$, and for all~$R>0$. Hence,
we find that~$\langle S(t),\phi\rangle=0$
for all~$t\in[0,T]$ and~$\phi\in C^\infty_0(
\R^n)$. Since~$C^\infty_0(
\R^n)$ is dense in~$H^{m}(\R^N)$,
we conclude that~$\langle S(t),\phi\rangle=0$
for all~$t\in[0,T]$ and~$\phi\in H^{m}(\R^N)$.
Hence~$v(t)=\hat S(t)=0$, as desired.
\end{proof}

{F}rom this, we can establish the uniqueness claim in Theorem~\ref{EXUN}:

\begin{corollary}\label{7:92}
There exists at most one solution of~\eqref{eq:lin_reac_diff}.
\end{corollary}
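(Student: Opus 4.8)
The plan is to deduce uniqueness directly from the triviality result of Lemma~\ref{wun}, applied to the difference of two candidate solutions. First I would take two solutions $u_1,u_2$ of~\eqref{eq:lin_reac_diff} in the sense of~\eqref{DEdebo} and set $v:=u_1-u_2$. Since the identity~\eqref{DEdebo} is linear in the unknown and the inhomogeneous term $\varphi(0)$ (produced by the common initial datum $\delta$) is the same for both solutions, it cancels upon subtraction. Hence, for all $\varphi\in C_0^\infty(\R^N)$ and all $t\in[0,T]$,
\[
\langle v(t),\varphi\rangle=\frac{1}{\Gamma(\alpha)}\int_0^t\frac{d\,\langle v(\tau),\Delta\varphi\rangle+a\,\langle v(\tau),\varphi\rangle}{(t-\tau)^{1-\alpha}}\,d\tau,
\]
and, moreover, $v\in L^\infty_\alpha\big([0,T],H^{-m}(\R^N)\big)$, since both $u_1$ and $u_2$ lie in this space.

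Next I would transfer this homogeneous identity to the Fourier side, where the spatial operator $d\Delta+a$ is converted into multiplication by the smooth symbol $w(\xi):=a-4\pi^2d|\xi|^2$. Concretely, I would test the Fourier transform $S:=\mathcal{F}(v)$ against an arbitrary $\varphi\in C_0^\infty(\R^N)$ and use the duality relation $\langle S(\cdot),\varphi\rangle=\langle v(\cdot),\mathcal{F}\varphi\rangle$ together with the elementary identity $\Delta(\mathcal{F}\varphi)=\mathcal{F}\big(-4\pi^2|\xi|^2\varphi\big)$. These turn the displayed identity into exactly
\[
\langle S(t),\varphi\rangle=\frac{1}{\Gamma(\alpha)}\int_0^t\frac{\langle S(\tau),w\varphi\rangle}{(t-\tau)^{1-\alpha}}\,d\tau .
\]
Because $w$ is a polynomial, it belongs to $C^\infty(\R^N)$, and $w\varphi\in C_0^\infty(\R^N)$; thus this is precisely the hypothesis of Lemma~\ref{wun}. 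Invoking that lemma yields $v(t)=0$ for all $t\in[0,T]$, that is $u_1=u_2$, which is the claim.

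The delicate point, and the step I would treat with the most care, is the Fourier transfer itself: the test function $\mathcal{F}\varphi$ produced by the duality pairing is a Schwartz function, not a compactly supported one, whereas~\eqref{DEdebo} is assumed a priori only for $\varphi\in C_0^\infty(\R^N)$. I would therefore first extend~\eqref{DEdebo} to this larger class. This is legitimate thanks to the quantitative control $\|v(\tau)\|_{H^{-m}(\R^N)}\le C\,\tau^{-\alpha}$ coming from~\eqref{Dedf2}: both sides of~\eqref{DEdebo} are continuous in the test function with respect to a sufficiently high Sobolev norm (note that $\Delta(\mathcal{F}\varphi)$ is again Schwartz, hence in every $H^s(\R^N)$, so the term $\langle v(\tau),\Delta\varphi\rangle$ is controlled), and the singular kernel $\tau^{-\alpha}(t-\tau)^{\alpha-1}$ is integrable, as recorded by the explicit computation of $\Xi$ in Lemma~\ref{wun}. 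A density argument, using that $C_0^\infty(\R^N)$ is dense in the relevant Sobolev space, then propagates~\eqref{DEdebo} to all Schwartz test functions, and in particular to $\mathcal{F}\varphi$, which closes the reduction. Apart from this functional-analytic care, the argument is a short deduction from Lemma~\ref{wun}.
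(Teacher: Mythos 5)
Your proof is correct and follows essentially the same route as the paper's: subtract the two solutions so the $\varphi(0)$ terms cancel, pass to the Fourier side so that $d\Delta+a$ becomes multiplication by $w(x)=a-4\pi^2 d|x|^2$ (via $\Delta\hat\varphi=-4\pi^2\,\mathcal{F}(|x|^2\varphi)$), and invoke Lemma~\ref{wun}. The density argument you single out as the delicate point---extending~\eqref{DEdebo} from $C^\infty_0(\R^N)$ to the Schwartz function $\mathcal{F}\varphi$ using the bound from~\eqref{Dedf2} and the integrability of the kernel---is precisely the step the paper performs implicitly (it simply asserts the identity for all $\psi\in H^{m}(\R^N)$ before applying it to $\psi=\hat\varphi$), so your treatment is, if anything, slightly more careful.
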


\begin{proof} Suppose that~$u_1$ and~$u_2$ are both solutions
of~\eqref{eq:lin_reac_diff}, and let~$v:=u_1-u_2$. Since~$v(t)\in H^{-m}(\R^N)$,
we can consider its Fourier Transform in the distributional sense,
that we denote by~$S(t)$. In view of~\eqref{DEdebo}, we know that
$$ \langle v(t),\psi\rangle=\frac{1}{\Gamma(\alpha)}\int_0^t \frac{
d\langle v(\tau),\Delta\psi \rangle+a\langle v(\tau),\psi\rangle
}{(t-\tau)^{1-\alpha}}\,d\tau,$$
for all~$\psi\in H^{m}(\R^N)$.

Hence, given any~$\varphi\in C^\infty_0(\R^N)$, we can apply the previous identity
to the Fourier Transform of~$\varphi$,
that we denote by~$\psi:=\hat\varphi$, and find that
\begin{equation} \label{th:u2}
\begin{split}
\langle S(t),\varphi\rangle\,&=\langle v(t),\psi\rangle\\
&=\frac{1}{\Gamma(\alpha)}\int_0^t \frac{
d\langle v(\tau),\Delta\hat\varphi \rangle+a\langle v(\tau),\hat\varphi\rangle
}{(t-\tau)^{1-\alpha}}\,d\tau,\end{split}\end{equation}
for all~$\varphi\in C^\infty_0(\R^N)$.

We also set~$\eta(x):=|x|^2\varphi(x)\in C^\infty_0(\R^N)$. We observe that
\begin{eqnarray*} &&\Delta\hat\varphi (\xi)=\sum_{j=1}^N \partial_{\xi_j}^2
\int_{\R^N} \varphi(x)\,e^{-2\pi{\rm i} x\cdot\xi}\,dx=
-4\pi^2\sum_{j=1}^N
\int_{\R^N} x_j^2\varphi(x)\,e^{-2\pi{\rm i} x\cdot\xi}\,dx\\
&&\qquad=-4\pi^2
\int_{\R^N} \eta(x)\,e^{-2\pi{\rm i} x\cdot\xi}\,dx=-4\pi^2\hat\eta(\xi).
\end{eqnarray*}
Notice also that
$$ -4\pi^2 d\eta(x)+a\varphi(x)=
(-4\pi^2 d|x|^2+a)\varphi(x)=w(x)\varphi(x),$$
where
$$ w(x):=-4\pi^2 d|x|^2+a.$$
Consequently,
\begin{eqnarray*}&&
d\langle v(\tau),\Delta\hat\varphi \rangle+a\langle v(\tau),\hat\varphi\rangle=
-4\pi^2 d\langle v(\tau),\hat\eta \rangle+a\langle v(\tau),\hat\varphi\rangle\\
&&\qquad=-4\pi^2 d\langle S(\tau),\eta \rangle+a\langle S(\tau),\varphi\rangle=
\langle S(\tau),w\varphi \rangle.
\end{eqnarray*}
We plug this information into~\eqref{th:u2} and we find that
\begin{equation*}\langle S(t),\varphi\rangle=\frac{1}{\Gamma(\alpha)}\int_0^t \frac{
\langle S(\tau),w\varphi \rangle
}{(t-\tau)^{1-\alpha}}\,d\tau,\end{equation*}
for all~$\varphi\in C^\infty_0(\R^N)$.

Then, in light of Lemma~\ref{wun}, we conclude that~$v(t)=0$, and thus~$u_1(t)=u_2(t)$,
for all~$t\in[0,T]$.
\end{proof}

\subsection{The fundamental solution of the reaction-diffusion equation:
existence of the solution}\label{EXIS}

Here we focus on the existence statement in Theorem~\ref{EXUN},
which will be obtained via the representation formula in~\eqref{MIT}
that exploits the Mittag-Leffler function~$E_\alpha$.
In our framework,
the crucial role played by~$E_{\alpha}$ consists in the fact that
\begin{equation}\label{DE} \capu \big( E_{\alpha}(\lambda t^{\alpha})\big)=\lambda
E_{\alpha}(\lambda t^{\alpha}),\end{equation}
for all~$\lambda\in \R$,
see e.g.~\cite[Theorem 4.3]{di}.

Moreover, it is convenient to extend the setting in~\eqref{eq:defML} by also defining
$$ E_{\alpha,\alpha  }(r):=\sum_{k=0}^{\infty} \frac{r^k}{\Gamma(\alpha+k\alpha)}.
$$
For later use, we recall some well-known properties of
the Mittag-Leffler functions in the following lemma.
\begin{lemma}\label{lem:ML_basics}
Let $0<\alpha<1$. Then
\begin{align}
E_\alpha'(r)&= \dfrac{1}{\alpha} E_{\alpha,\alpha  }(r) &&\text{for every $r\in\R$,}&
\label{eq:der_alfalfa}\\
E_\alpha(r)&>0,\ E_{\alpha,\alpha  }(r) >0 &&\text{for every $r\in\R$,}
\label{eq:ML_complMonoton}\\
E_\alpha(r) &= \frac{1}{\alpha}\exp{r^{1/\alpha}} + O\left(\frac{1}{r}\right), &&r\to+\infty
\label{eq:ML_asynt+infty},\\
E_\alpha(r) &= -\frac{1}{\Gamma(1-\alpha)}\,\frac{1}{r} + O\left(\frac{1}{r^2}\right), &&r\to-\infty.
\label{eq:ML_asynt-infty}
\end{align}
\end{lemma}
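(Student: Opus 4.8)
The plan is to establish the four assertions in turn, deriving \eqref{eq:der_alfalfa} and the positivity on the positive half-line by elementary series manipulations, and then extracting the negative-axis positivity and both asymptotics from a single contour-integral representation of $E_\alpha$. For \eqref{eq:der_alfalfa}, I would differentiate the entire series in \eqref{eq:defML} term by term (legitimate since its radius of convergence is infinite, so convergence is uniform on compacta), obtaining $E_\alpha'(r)=\sum_{k\ge1}\frac{k\,r^{k-1}}{\Gamma(1+k\alpha)}$. After shifting the index, the functional equation $\Gamma(1+s)=s\,\Gamma(s)$ with $s=(k+1)\alpha$ turns the coefficient $\frac{k+1}{\Gamma(1+(k+1)\alpha)}$ into $\frac{1}{\alpha\,\Gamma((k+1)\alpha)}$, which is exactly $\frac1\alpha$ times the $k$-th coefficient of $E_{\alpha,\alpha}$; this proves \eqref{eq:der_alfalfa}. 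The positivity in \eqref{eq:ML_complMonoton} is immediate for $r\ge0$, since then every term of both series is nonnegative and the $k=0$ term is strictly positive.

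The core technical input, from which both the negative-axis positivity and the asymptotics follow, is the Hankel-type representation
\[
E_\alpha(z)=\frac{1}{2\pi\ic}\int_{\Hcal}\frac{\zeta^{\alpha-1}\,e^{\zeta}}{\zeta^\alpha-z}\,d\zeta,
\]
valid for the entire function $E_\alpha$, where $\Hcal$ encircles the branch cut along the negative real axis. Collapsing $\Hcal$ onto the cut for $z=-x<0$ yields a real Laplace-type representation $E_\alpha(-x)=\int_0^{\infty}e^{-xu}K_\alpha(u)\,du$ with the explicit kernel
\[
K_\alpha(u)=\frac{\sin(\alpha\pi)}{\pi}\,\frac{u^{\alpha-1}}{u^{2\alpha}+2u^\alpha\cos(\alpha\pi)+1}.
\]
For $0<\alpha<1$ one has $\sin(\alpha\pi)>0$ and, reading the denominator as a quadratic in $u^\alpha$ with discriminant $4(\cos^2(\alpha\pi)-1)<0$, the denominator is strictly positive; hence $K_\alpha>0$ and $E_\alpha(-x)>0$. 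Differentiating under the integral sign gives $E_\alpha'(-x)=\int_0^\infty u\,e^{-xu}K_\alpha(u)\,du>0$, and combining this with \eqref{eq:der_alfalfa}, which reads $E_{\alpha,\alpha}(-x)=\alpha E_\alpha'(-x)$, completes \eqref{eq:ML_complMonoton}.

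The asymptotics \eqref{eq:ML_asynt+infty} and \eqref{eq:ML_asynt-infty} come from deforming the same contour and separating two sectors. The integrand has a simple pole at $\zeta=z^{1/\alpha}$; since $\frac{d}{d\zeta}(\zeta^\alpha-z)=\alpha\zeta^{\alpha-1}$ cancels the numerator factor $\zeta^{\alpha-1}$, the residue there is simply $\frac1\alpha e^{z^{1/\alpha}}$, \emph{with no algebraic prefactor} (this cancellation is exactly why the lemma carries the bare exponential). When $z=r\to+\infty$ the pole is swept across by the deformation and its residue produces the dominant term $\frac1\alpha e^{r^{1/\alpha}}$, while the leftover integral along the collapsed contour expands into the convergent algebraic series $-\sum_{k\ge1}\frac{z^{-k}}{\Gamma(1-\alpha k)}$, whose first term is the announced $O(1/r)$. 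When $z=r\to-\infty$ the pole is not captured, the exponential contribution is absent, and only the algebraic series survives, giving $E_\alpha(r)=-\frac{1}{\Gamma(1-\alpha)}\frac1r+O(1/r^2)$. I expect the main obstacle to be precisely this sector analysis: one must justify the contour deformations and track which terms are exponentially dominant on each half-line; the negative-axis positivity is the other delicate point, but it is fully settled once the nonnegativity of $K_\alpha$ is in hand. Both ingredients are classical and may alternatively be quoted from the standard references on the Mittag-Leffler function.
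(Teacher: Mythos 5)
Your proposal is correct, but it takes a genuinely more self-contained route than the paper. For \eqref{eq:der_alfalfa} and the positivity on $[0,+\infty)$ you do exactly what the paper does: term-by-term differentiation plus $\Gamma(1+(k+1)\alpha)=\alpha(k+1)\Gamma((k+1)\alpha)$, and inspection of the series. The divergence is in the negative-axis positivity and the two asymptotics: the paper disposes of these by citation to Erd\'elyi--Magnus--Oberhettinger--Tricomi (complete monotonicity of $E_\alpha(-z)$, eq.~(6) on p.~207, for \eqref{eq:ML_complMonoton}; eqs.~(10) and~(7) for \eqref{eq:ML_asynt+infty} and \eqref{eq:ML_asynt-infty}), whereas you re-derive them from the Hankel integral representation. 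Your derivation is sound: the kernel $K_\alpha$ is the classical Pollard spectral density, its positivity follows as you say (the denominator equals $(u^\alpha+\cos(\alpha\pi))^2+\sin^2(\alpha\pi)>0$), differentiation under the integral sign is legitimate, and the residue computation giving $\frac1\alpha e^{z^{1/\alpha}}$ with the cancellation of $\zeta^{\alpha-1}$ is right and indeed explains why \eqref{eq:ML_asynt+infty} carries a bare exponential with constant prefactor $\frac1\alpha$. What your route buys is transparency: positivity comes from an explicit positive measure (in effect you re-prove the complete monotonicity statement that the paper merely quotes), and each asymptotic regime is tied to a concrete mechanism (pole captured versus pole missed by the deformed contour). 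The cost is that the contour-deformation and sector analysis, which you flag but do not carry out, is precisely the nontrivial content of the classical results the paper cites; so as written your argument has the same logical status as the paper's (a proof modulo classical input), only with more of the machinery displayed. One genuine slip to fix: the algebraic tail $-\sum_{k\ge1} z^{-k}/\Gamma(1-\alpha k)$ is \emph{not} a convergent series --- by the reflection formula, $1/|\Gamma(1-\alpha k)|=|\sin(\pi\alpha k)|\,\Gamma(\alpha k)/\pi$ grows factorially along a subsequence --- it is only an asymptotic expansion, so it must be truncated at finite order with a remainder estimate. This does not damage the conclusions, since \eqref{eq:ML_asynt+infty} and \eqref{eq:ML_asynt-infty} only require the first one or two terms together with an $O$-bound on the remainder.
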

\begin{proof}
All the properties are well-known: \eqref{eq:der_alfalfa} is immediate, recalling that
$\Gamma(1+(k+1)\alpha) = \alpha(k+1) \Gamma((k+1)\alpha)$; \eqref{eq:ML_complMonoton} is trivial
for $r\ge0$, while for $r<0$ it descends from \eqref{eq:der_alfalfa}, and from the fact that
$E_\alpha(-z)$ is completely monotonic for $z>0$ and $0\le\alpha\le1$ (see e.g. \cite{MR0066496},
eq. (6) on p. 207); finally, \eqref{eq:ML_asynt+infty} and \eqref{eq:ML_asynt-infty} are equations (10) and (7) on p. 208 and p. 207 in \cite{MR0066496}, respectively.
\end{proof}
\begin{remark}
Since $E_1(r)=e^r$, the claims in Lemma~\ref{lem:ML_basics}
hold true also when $\alpha=1$, except of course
\eqref{eq:ML_asynt-infty}.
\end{remark}


\begin{lemma}\label{lem:fund_sol}
Let~$T>0$.
Problem \eqref{eq:lin_reac_diff} is solved in~$t\in[0,T]$ by
\begin{equation}\label{eq:ualpha}
u(x,t) :={\mathcal{F}}^{-1} \Big(E_{\alpha}\big((a-4\pi^2
d|\xi|^{2})t^{\alpha}\big)\Big),
\end{equation}
being~${\mathcal{F}}$ the Fourier Transform.
\end{lemma}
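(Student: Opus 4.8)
The plan is to verify directly that the candidate $u$ in~\eqref{eq:ualpha} satisfies the two requirements in the definition of solution, namely membership in $L^\infty_\alpha\big([0,T],H^{-m}(\R^N)\big)$ and the Volterra-type identity~\eqref{DEdebo}. Throughout I would work on the Fourier side, writing $\hat u(\xi,t):=E_\alpha(\lambda(\xi)t^\alpha)$ with $\lambda(\xi):=a-4\pi^2 d|\xi|^2$, so that $u(t)={\mathcal{F}}^{-1}\big(\hat u(\cdot,t)\big)$ and, for $\varphi\in C^\infty_0(\R^N)$, the pairing reads $\langle u(t),\varphi\rangle=\int_{\R^N}\hat u(\xi,t)\,{\mathcal{F}}^{-1}\varphi(\xi)\,d\xi$.

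For the membership, I would first note that, by~\eqref{eq:der_alfalfa} and~\eqref{eq:ML_complMonoton}, the function $E_\alpha$ is positive and strictly increasing; since $\lambda(\xi)\le a$ attains its maximum at $\xi=0$, this yields the two-sided control $0<\hat u(\xi,t)\le E_\alpha(at^\alpha)\le E_\alpha(aT^\alpha)$ for all $\xi\in\R^N$ and all $t\in[0,T]$. Estimating $|\langle u(t),\varphi\rangle|$ by Cauchy--Schwarz against the weight $\langle\xi\rangle^{-2m}$, where $\langle\xi\rangle:=(1+|\xi|^2)^{1/2}$, and invoking Parseval, I obtain $\|u(t)\|_{H^{-m}(\R^N)}\le C\big(\int_{\R^N}\hat u(\xi,t)^2\langle\xi\rangle^{-2m}\,d\xi\big)^{1/2}$. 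Since $2m>N$ the weight is integrable, and the uniform upper bound on $\hat u$ then gives $\sup_{t\in[0,T]}\|u(t)\|_{H^{-m}(\R^N)}<+\infty$, from which~\eqref{Dedf2} follows a fortiori.

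For the identity~\eqref{DEdebo}, the starting point is the scalar Mittag-Leffler relation: fixing $\xi$ and combining~\eqref{DE} (with $\lambda=\lambda(\xi)$) with the Volterra reformulation~\eqref{VOLTERRA} and $E_\alpha(0)=1$, one gets, pointwise in $\xi$, that $E_\alpha(\lambda(\xi)t^\alpha)=1+\frac{1}{\Gamma(\alpha)}\int_0^t\frac{\lambda(\xi)E_\alpha(\lambda(\xi)\tau^\alpha)}{(t-\tau)^{1-\alpha}}\,d\tau$. Multiplying by ${\mathcal{F}}^{-1}\varphi(\xi)$ and integrating in $\xi$, the left-hand side becomes $\langle u(t),\varphi\rangle$, while the constant term produces $\int_{\R^N}{\mathcal{F}}^{-1}\varphi(\xi)\,d\xi=\varphi(0)$. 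In the remaining term I would split $\lambda(\xi)=a-4\pi^2 d|\xi|^2$ and recognize the two pieces exactly as in the proof of Corollary~\ref{7:92}: the part $\int_{\R^N}a\,\hat u\,{\mathcal{F}}^{-1}\varphi\,d\xi$ equals $a\langle u(\tau),\varphi\rangle$, whereas the identity $-4\pi^2|\xi|^2{\mathcal{F}}^{-1}\varphi(\xi)={\mathcal{F}}^{-1}(\Delta\varphi)(\xi)$ turns the other part into $d\langle u(\tau),\Delta\varphi\rangle$; this reproduces precisely the integrand of~\eqref{DEdebo}.

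The step requiring genuine care, and which I regard as the main obstacle, is the interchange of the $\tau$- and $\xi$-integrations (Fubini) used above, since $\lambda(\xi)$ grows quadratically while the kernel is singular at $\tau=t$ and an extra $\tau^{-\alpha}$ appears near $\tau=0$. To justify it I would prove the uniform bound $\sup_{r\le aT^\alpha}|r|\,E_\alpha(r)=:M<+\infty$, which follows from the asymptotics~\eqref{eq:ML_asynt-infty} (giving $|r|E_\alpha(r)\to 1/\Gamma(1-\alpha)$ as $r\to-\infty$) together with continuity on compact sets. Writing $r=\lambda(\xi)\tau^\alpha\le aT^\alpha$, this yields $|\lambda(\xi)|\,E_\alpha(\lambda(\xi)\tau^\alpha)\le M\tau^{-\alpha}$ uniformly in $\xi\in\R^N$ and $\tau\in(0,T]$. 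Since ${\mathcal{F}}^{-1}\varphi$ is a Schwartz function, the double integral is then dominated by $M\,\|{\mathcal{F}}^{-1}\varphi\|_{L^1(\R^N)}\int_0^t\tau^{-\alpha}(t-\tau)^{\alpha-1}\,d\tau$, and the last integral equals the scale-invariant Beta value $\Gamma(\alpha)\Gamma(1-\alpha)$. This absolute convergence legitimizes Fubini and completes the verification of~\eqref{DEdebo}; together with Corollary~\ref{7:92}, it establishes Theorem~\ref{EXUN}.
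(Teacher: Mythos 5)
Your proposal is correct, and its second half (the verification of \eqref{DEdebo}) follows essentially the same route as the paper: the scalar identity $E_\alpha(\lambda t^\alpha)=1+\tfrac{\lambda}{\Gamma(\alpha)}\int_0^t (t-\tau)^{\alpha-1}E_\alpha(\lambda\tau^\alpha)\,d\tau$ coming from \eqref{DE} and \eqref{VOLTERRA}, paired against the test function on the Fourier side, with the factor $-4\pi^2 d|\xi|^2$ absorbed into the transform of $\Delta\varphi$. The differences are in the first half and in one technical step. For membership in $L^\infty_\alpha\big([0,T],H^{-m}(\R^N)\big)$, you argue differently from the paper: you use monotonicity and positivity of $E_\alpha$ (via \eqref{eq:der_alfalfa}--\eqref{eq:ML_complMonoton}) to get the uniform bound $0<\hat u(\xi,t)\le E_\alpha(aT^\alpha)$ and then Cauchy--Schwarz against the weight $\langle\xi\rangle^{-2m}$; this is simpler and even yields a $t$-uniform bound on $\|u(t)\|_{H^{-m}(\R^N)}$ (stronger than \eqref{Dedf2}), but it needs $2m>N$ for the weight to be integrable --- legitimate here, since $m>N/2$ is the paper's standing assumption. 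The paper instead invokes the asymptotics \eqref{eq:ML_asynt-infty} to exploit the decay $|\hat u(\xi,t)|\le C(|\xi|^{2}t^{\alpha})^{-1}$ for large $|\xi|$, splitting $\R^N$ into $B_\rho$ and its complement; this buys the weaker requirement $m>(N-4)/2$ and records the spatial decay of $\hat u$ that underlies the discussion in \eqref{LPQUA0}--\eqref{LPQUA}, at the price of a bound that degenerates like $t^{-\alpha}$ as $t\to0$. Finally, your justification of the Fubini interchange --- the uniform estimate $\sup_{r\le aT^\alpha}|r|E_\alpha(r)<+\infty$ from \eqref{eq:ML_asynt-infty}, giving the dominating bound $M\tau^{-\alpha}$ and the scale-invariant Beta integral $\int_0^t\tau^{-\alpha}(t-\tau)^{\alpha-1}\,d\tau=\Gamma(\alpha)\Gamma(1-\alpha)$ --- is a genuine addition: the paper swaps the $\tau$- and $\xi$-integrations without comment, and your estimate is exactly what makes that step rigorous.
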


\begin{proof}
First of all, we show that, if~$u$ is as in~\eqref{eq:ualpha},
for every~$t\in[0,T]$,
\begin{equation}\label{good}
u(t)\in H^{-m}(\R^N),\quad {\mbox{ and }}\quad \sup_{t\in[0,T]}\frac{\langle u(t),\phi\rangle}{
\|\phi\|_{H^{m}(\R^N)}}<+\infty,
\end{equation}
as long as
\begin{equation}\label{mgra}\N\ni
m>\frac{N-4}2.\end{equation}
To check this, we first observe that,
for every smooth and compactly supported function~$\phi$ and any~$\alpha\in\N^N$,
considering the Fourier Transform~$\hat\phi={\mathcal{F}}\phi$,
we have that
$$ \| D^\alpha\phi\|_{L^2(\R^N)}^2
=\| {\mathcal{F}} (D^\alpha\phi)\|_{L^2(\R^N)}^2
=\| \xi^\alpha\hat\phi\|_{L^2(\R^N)}^2.$$
Also, if~$\rho:=\frac{1}{\pi}\sqrt{\frac{a}{2d}}$ and~$\xi\in\R^N\setminus B_\rho$, we have that
$$ 2\pi^2 d|\xi|^2-a \ge0$$
and therefore
$$ \frac{1}{4\pi^2 d|\xi|^2-a}\le \frac{1}{2\pi^2 d|\xi|^2}.
$$
{F}rom this and~\eqref{eq:ML_asynt-infty}, we see that
\begin{equation}\label{09:348}
\begin{split}
&\left|\int_{ \R^N\setminus B_\rho }
E_{\alpha}\big((a-4\pi^2 d|\xi|^{2})t^{\alpha}\big)\,\hat\phi(\xi)\,d\xi\right|\le
C\,\int_{ \R^N\setminus B_\rho }
\frac{|\hat\phi(\xi)|}{(4\pi^2 d|\xi|^{2}-a)t^{\alpha}}\,d\xi\\
&\qquad\le
C\,\int_{ \R^N\setminus B_\rho }
\frac{|\hat\phi(\xi)|}{2\pi^2 dt^{\alpha}|\xi|^{2}}\,d\xi
=\frac{C}{t^\alpha}\,
\int_{ \R^N\setminus B_\rho }
\frac{ |\xi|^{{m}} |\hat\phi(\xi)| }{ |\xi|^{2+{{m}}}}\,d\xi
\\
&\qquad\le\frac{C}{t^\alpha}\,\sqrt{\int_{ \R^N\setminus B_\rho }|\xi|^{2m}|\hat\phi(\xi)|^2\,d\xi}
\,\sqrt{\int_{ \R^N\setminus B_\rho }
\frac{d\xi}{|\xi|^{4+2m}}\,d\xi}\\&\qquad \le
\frac{C}{t^\alpha}\,\sqrt{\int_{ \R^N }|\xi|^{2m}|\hat\phi(\xi)|^2\,d\xi}
\le\frac{C}{t^\alpha}\,\| D^m \phi\|_{L^2(\R^N)},
\end{split}
\end{equation}
for some~$C>0$ varying from line to line and where~\eqref{mgra} has been exploited.

On the other hand,
\begin{eqnarray*}&&
\left|\int_{B_\rho }
E_{\alpha}\big((a-4\pi^2 d|\xi|^{2})t^{\alpha}\big)\,\hat\phi(\xi)\,d\xi\right|\le
C\,\int_{B_\rho }
|\hat\phi(\xi)|\,d\xi\le
C\,\sqrt{\int_{B_\rho }
|\hat\phi(\xi)|^2\,d\xi}\\
&&\qquad\le C\,\|\hat\phi\|_{L^2(\R^N)}
=C\,\|\phi\|_{L^2(\R^N)}.
\end{eqnarray*}
This and~\eqref{09:348}, up to renaming~$C$, yield that
\begin{eqnarray*}
\frac{C\,\|\phi\|_{H^{m}(\R^N)}}{t^\alpha}&\ge&
\left|\int_{ \R^N }
E_{\alpha}\big((a-4\pi^2 d|\xi|^{2})t^{\alpha}\big)\,\hat\phi(\xi)\,d\xi\right|\\
&=&
\left|\int_{ \R^N }
{\mathcal{F}}^{-1}\Big(
E_{\alpha}\big((a-4\pi^2 d|\cdot|^{2})t^{\alpha}\big)\Big)\,\phi(x)\,dx\right|\\
&=& \big| \langle u(t),\phi\rangle\big|.
\end{eqnarray*}
This proves~\eqref{good}.

{F}rom this, we know that~$u\in L^\infty_\alpha\big([0,T], H^{-m}(\R^N)\big)$.
Hence, it remains to show that~\eqref{DEdebo} is satisfied.
To this end, we recall~\eqref{VOLTERRA} and~\eqref{DE},
and we write that
$$
E_\alpha(\lambda t^\alpha)=1+
\frac{\lambda}{\Gamma(\alpha)}\int_0^t \frac{E_\alpha(\lambda\tau^\alpha)}{
(t-\tau)^{1-\alpha}}\,d\tau .
$$
Using this formula with~$\lambda:=a-4\pi^2d|\xi|^{2}$, and recalling~\eqref{eq:ualpha},
we obtain that
$$ \hat u(\xi,t)=
E_\alpha\big((a-4\pi^2d|\xi|^{2}) t^\alpha\big)=1+
\frac{a-4\pi^2d|\xi|^{2}}{\Gamma(\alpha)}\int_0^t \frac{E_\alpha\big((a-4\pi^2d|\xi|^{2})\tau^\alpha\big)}{
(t-\tau)^{1-\alpha}}\,d\tau .
$$
As a consequence, for every~$\varphi\in C^\infty_0(\R^N)$,
\begin{eqnarray*}
\langle u(t),\varphi\rangle-\varphi(0)&=&
\langle \hat u(t)-1,\hat\varphi\rangle\\&=&
\int_{\R^N} \left[
\frac{(a-4\pi^2d|\xi|^{2})\hat\varphi(\xi)}{\Gamma(\alpha)}\int_0^t \frac{E_\alpha\big((a-4\pi^2d|\xi|^{2})\tau^\alpha\big)}{
(t-\tau)^{1-\alpha}}\,d\tau \right]\,d\xi
\\&=&
\int_{\R^N} \left[
\frac{a\hat\varphi(\xi)+d{\mathcal{F}}(\Delta\varphi)(\xi)}{\Gamma(\alpha)}\int_0^t \frac{E_\alpha\big((a-4\pi^2d|\xi|^{2})\tau^\alpha\big)}{
(t-\tau)^{1-\alpha}}\,d\tau \right]\,d\xi\\
\\&=& \frac{1}{\Gamma(\alpha)}\int_0^t
 \left[ \int_{\R^N}
\frac{\big(a\hat\varphi(\xi)+d{\mathcal{F}}(\Delta\varphi)(\xi)\big)\;
E_\alpha\big((a-4\pi^2d|\xi|^{2})\tau^\alpha\big)}{
(t-\tau)^{1-\alpha}}\,d\xi \right]\,d\tau\\
&=&\frac{1}{\Gamma(\alpha)}\int_0^t\frac{
\langle a\hat\varphi+d{\mathcal{F}}(\Delta\varphi),\,\hat u(\tau)\rangle
}{(t-\tau)^{1-\alpha}}\,d\tau\\
&=&\frac{1}{\Gamma(\alpha)}\int_0^t\frac{
\langle a \varphi+d \Delta\varphi,\, u(\tau)\rangle
}{(t-\tau)^{1-\alpha}}\,d\tau,
\end{eqnarray*}
and this establishes~\eqref{DEdebo}.
\end{proof}

%
\begin{remark}
In case $\alpha=1$, then \eqref{eq:lin_reac_diff} reduces to the standard heat equation, and
recalling that $E_1(r)=e^r$ Lemma \ref{lem:fund_sol} provides the well-known expression of
the solution:
\[
u(x,t) = \frac{1}{(4\pi dt )^{N/2}}\exp{\left( at - \frac{|x|^2}{4dt}\right)}.
\]
\end{remark}

\section{Some estimates about Mittag-Leffler functions}\label{TRE}

This section presents some useful facts
on the Mittag-Leffler function and on similar special functions.
These properties will be utilized in the next section to prove Theorem~\ref{prop:N=1}.
For other interesting properties
of the Mittag-Leffler function see e.g.~\cite{MR1906764}, \cite{MR3253257},
and the references therein.

\begin{lemma}\label{lem:MLda sopra}
Let $\alpha\ge 1/2$ and $r\ge0$. Then
\[
E_\alpha(r) \le \alpha E'_\alpha(r) + 1 - \frac{1}{\Gamma(\alpha)}
 + \left(\frac{1}{\Gamma(1+\alpha)} - \frac{1}{\Gamma(2\alpha)} \right)r.
\]
\end{lemma}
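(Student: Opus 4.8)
The plan is to expand both sides as power series in $r$ and compare them coefficient by coefficient. Using the definition~\eqref{eq:defML} of $E_\alpha$ together with the identity~\eqref{eq:der_alfalfa}, which gives $\alpha E_\alpha'(r)=E_{\alpha,\alpha}(r)=\sum_{k=0}^\infty r^k/\Gamma((k+1)\alpha)$, I would write
\[
E_\alpha(r)-\alpha E_\alpha'(r)=\sum_{k=0}^\infty c_k\,r^k,\qquad c_k:=\frac{1}{\Gamma(1+k\alpha)}-\frac{1}{\Gamma((k+1)\alpha)}.
\]
A direct computation shows that the constant term $c_0=1-1/\Gamma(\alpha)$ and the linear term $c_1=1/\Gamma(1+\alpha)-1/\Gamma(2\alpha)$ are \emph{exactly} the two explicit coefficients appearing on the right-hand side of the claimed inequality. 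Hence the statement is equivalent to the nonpositivity of the remaining tail, namely $\sum_{k\ge 2} c_k\, r^k\le 0$ for every $r\ge 0$.

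Since $r\ge 0$, it suffices to prove that $c_k\le 0$ for every $k\ge 2$, that is, $\Gamma((k+1)\alpha)\le\Gamma(1+k\alpha)$. I would compare the two arguments directly: their difference is $(k+1)\alpha-(k\alpha+1)=\alpha-1\le 0$ under the standing assumption $\alpha\le 1$, so $(k+1)\alpha\le k\alpha+1$. Moreover, for $k\ge 2$ and $\alpha\ge\frac12$ the smaller argument satisfies $(k+1)\alpha\ge 3\alpha\ge\frac32$; this is precisely the step where the hypothesis $\alpha\ge\frac12$ enters, as it keeps both arguments inside the interval $[\frac32,+\infty)$. The desired inequality $\Gamma((k+1)\alpha)\le\Gamma(1+k\alpha)$ then follows from the monotonicity of $\Gamma$ on that interval.

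The one delicate point, and the main obstacle, is that $\Gamma$ is \emph{not} monotone on all of $(0,+\infty)$: it decreases up to its unique positive minimizer (near $1.4616$) and increases afterwards, so comparing $\Gamma$-values is only legitimate once both arguments are known to lie on the increasing branch — which is exactly what the two reductions above guarantee. To make this rigorous without invoking the numerical value of the minimizer, I would observe that the digamma function $\psi=\Gamma'/\Gamma$ is increasing, since $\psi'(x)=\sum_{n\ge 0}(x+n)^{-2}>0$, and that $\psi(3/2)=2-\gamma-2\ln 2>0$; hence $\Gamma'=\psi\,\Gamma>0$ on $[\frac32,+\infty)$, yielding the required monotonicity. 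Combining this with the argument comparison gives $c_k\le 0$ for all $k\ge 2$, and therefore the claim. I would also flag that $\alpha\le 1$ is genuinely needed here (consistently with the paper's standing range $\alpha\in(0,1]$): for $\alpha>1$ the comparison of the Gamma arguments reverses and the stated inequality in fact fails.
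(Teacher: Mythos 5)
Your proof is correct and essentially coincides with the paper's own argument: the paper likewise expands $E_\alpha$ and $\alpha E_\alpha'=E_{\alpha,\alpha}$ as power series, observes that the constant and linear coefficients match the explicit terms in the statement, and reduces the claim to $\frac{1}{\Gamma(1+k\alpha)}\le\frac{1}{\Gamma(\alpha+k\alpha)}$ for $k\ge 2$, which holds because $\alpha\le 1$ and $\alpha\ge\frac12$ force both arguments into $\left[\frac32,+\infty\right)$, where $\Gamma'>0$. The only difference is that where the paper simply cites \cite{DECO_N1935} for the monotonicity of $\Gamma$ beyond its minimum, you give a short self-contained verification via the digamma function, which is a harmless (indeed welcome) addition.
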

\begin{proof}
It follows by \eqref{eq:defML}, \eqref{eq:der_alfalfa}, and by the fact that
\[
s\ge\frac32 \implies \Gamma'(s)>0,
\qquad
\text{i.e. }
k\ge 2 \implies \frac{1}{\Gamma(1+k\alpha)}
\le  \frac{1}{\Gamma(\alpha+k\alpha)}
\]
(this is well known, see e.g. \cite{DECO_N1935}). Indeed,
\[
E_\alpha(r)  - 1 - \frac{r}{\Gamma(1+\alpha)}
=    \sum_{k=2}^{\infty} \frac{r^k}{\Gamma(1+k\alpha)}
\le  \sum_{k=2}^{\infty} \frac{r^k}{\Gamma(\alpha+k\alpha)}
=    E_{\alpha,\alpha  }(r) - \frac{1}{\Gamma(\alpha)}  - \frac{r}{\Gamma(2\alpha)}.
\qedhere
\]
\end{proof}
\begin{lemma}\label{lem:MLda sotto}
Let $\alpha\ge 1/2$ and $r>0$. Then
\[
E_\alpha(r) \ge
\frac{\alpha}{ r} E'_\alpha(r) - \frac{1}{\Gamma(\alpha)r} + 1 - \frac{1}{\Gamma(2\alpha)}.
\]
\end{lemma}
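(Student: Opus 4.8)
The plan is to mirror the proof of the companion bound in Lemma~\ref{lem:MLda sopra}: rewrite both sides as power series in~$r$ and reduce the claim to a term-by-term comparison of the coefficients, which in turn follows from the monotonicity of the Gamma function on~$[3/2,+\infty)$ together with the hypothesis $\alpha\ge 1/2$.

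Concretely, I would first use the identity \eqref{eq:der_alfalfa} to replace $\frac{\alpha}{r}E'_\alpha(r)$ by $\frac{1}{r}E_{\alpha,\alpha}(r)$, so that the right-hand side becomes
\[
\frac{1}{r}E_{\alpha,\alpha}(r)-\frac{1}{\Gamma(\alpha)\,r}+1-\frac{1}{\Gamma(2\alpha)}.
\]
The term $-\frac{1}{\Gamma(\alpha)\,r}$ is designed to cancel the $k=0$ summand of $E_{\alpha,\alpha}(r)=\sum_{k\ge0}r^k/\Gamma(\alpha+k\alpha)$, so that $\frac1r\big(E_{\alpha,\alpha}(r)-\tfrac{1}{\Gamma(\alpha)}\big)=\sum_{k\ge1}r^{k-1}/\Gamma(\alpha(k+1))$; the term $-\frac1{\Gamma(2\alpha)}$ then absorbs the $k=1$ summand of this shifted series. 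After this bookkeeping the right-hand side collapses, upon setting $j:=k-1$, to
\[
1+\sum_{j\ge1}\frac{r^{j}}{\Gamma\big(\alpha(j+2)\big)}.
\]
Comparing with the defining series $E_\alpha(r)=1+\sum_{j\ge1}r^{j}/\Gamma(1+j\alpha)$ from \eqref{eq:defML}, and using $r>0$, the lemma reduces to the coefficient inequality
\[
\frac{1}{\Gamma(1+j\alpha)}\ge\frac{1}{\Gamma\big(\alpha(j+2)\big)}\qquad\text{for every }j\ge1.
\]

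To establish this I would argue exactly as in Lemma~\ref{lem:MLda sopra}: for $j\ge1$ and $\alpha\ge1/2$ both arguments satisfy $1+j\alpha\ge 3/2$ and $\alpha(j+2)\ge 3/2$, and moreover $\alpha(j+2)-(1+j\alpha)=2\alpha-1\ge0$. Since $\Gamma$ is increasing on $[3/2,+\infty)$ (recall $\Gamma'(s)>0$ for $s\ge 3/2$), this yields $\Gamma(1+j\alpha)\le\Gamma(\alpha(j+2))$, which is the desired coefficient inequality, and summing against $r^j>0$ gives the stated lower bound.

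There is no serious obstacle here; the proof is elementary once the right-hand side is expanded. The only point requiring care is the index bookkeeping: because dividing $E_{\alpha,\alpha}(r)$ by~$r$ shifts the exponents, the relevant Gamma comparison is between $\Gamma(1+j\alpha)$ and $\Gamma(\alpha(j+2))$, whose gap is $2\alpha-1\ge0$ --- the opposite direction to the gap $1-\alpha\ge0$ appearing in the upper estimate of Lemma~\ref{lem:MLda sopra}. It is precisely this reversal that turns the same monotonicity argument into a lower, rather than an upper, bound, and one should double-check that after the shift both arguments still lie in the region $[3/2,+\infty)$ where $\Gamma$ is monotone (the borderline case $j=1$, $\alpha=1/2$ gives both arguments equal to $3/2$, so equality holds and the estimate remains valid).
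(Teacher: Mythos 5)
Your proof is correct and takes essentially the same route as the paper: the paper multiplies the claimed inequality by $r$ and compares the series $rE_\alpha(r)=\sum_{k\ge1}r^k/\Gamma(1-\alpha+k\alpha)$ term by term with $E_{\alpha,\alpha}(r)$, which after your reindexing $j=k-1$ is exactly your coefficient inequality $\Gamma(1+j\alpha)\le\Gamma(\alpha(j+2))$, obtained from the same gap $2\alpha-1\ge0$ and the same monotonicity of $\Gamma$ on $[3/2,+\infty)$. The only difference is cosmetic: you divide by $r$ (shifting the series of $E_{\alpha,\alpha}$) where the paper multiplies by $r$ (shifting the series of $E_\alpha$).
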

\begin{proof}
We have
\[
rE_{\alpha}(r)=\sum_{k=0}^{\infty} \frac{r^{k+1}}{\Gamma(1+k\alpha)}
=\sum_{k=1}^{\infty} \frac{r^{k}}{\Gamma(1-\alpha+k\alpha)}.
\]
Since $\alpha\ge1/2$ we have $1-\alpha \le \alpha$. Using again the monotonicity of $\Gamma$
we infer
\[
k\ge 2 \implies \frac{1}{\Gamma(1-\alpha+k\alpha)}
\ge  \frac{1}{\Gamma(\alpha+k\alpha)},
\]
and
\[
rE_\alpha(r)  - r
=    \sum_{k=2}^{\infty} \frac{r^k}{\Gamma(1-\alpha+k\alpha)}
\ge  \sum_{k=2}^{\infty} \frac{r^k}{\Gamma(\alpha+k\alpha)}
=    E_{\alpha,\alpha  }(r) - \frac{1}{\Gamma(\alpha)}  - \frac{1}{\Gamma(2\alpha)}r.
\qedhere
\]
\end{proof}
\begin{remark}
Since
\[
E_1(r)=e^r,\qquad E_{1/2}(r) = \left(1+\frac{2}{\sqrt{\pi}}\int_{0}^{r}e^{-s^2}\,ds\right)e^{r^2},
\]
we obtain
\[
E'_1(r) = E_1(r),\qquad E'_{1/2}(r) = 2r E_{1/2}(r) + \dfrac{2}{\sqrt{\pi}}.
\]
In particular, up to the lower order terms, Lemma \ref{lem:MLda sopra} is optimal for $\alpha = 1$, while Lemma \ref{lem:MLda sotto} is optimal for $\alpha = 1/2$.
\end{remark}

\section{Estimates of the fundamental solution}\label{QUAT}

The goal of this section is to prove Theorem~\ref{prop:N=1}. To this end,
in the following we study the fundamental solution $u$ defined in Theorem~\ref{EXUN},
restricting to dimension $N=1$. {F}rom now on,
we take $a:=1$ and~$d:=1$,
the general case following by suitable change of variables in
the integral defining $u$.
In this way, formula~\eqref{MIT} becomes
\begin{equation}\label{la u}
\begin{split}
u(x,t) \,&=\int_{\R}E_{\alpha}\big((1-4\pi^2
|\xi|^{2})t^{\alpha}\big)\,\cos(2\pi x \xi)\,d\xi\\
&=2\int_0^{+\infty}E_{\alpha}\big((1-4\pi^2
|\xi|^{2})t^{\alpha}\big)\,\cos(2\pi x \xi)\,d\xi\\
&=\frac1\pi \int_0^{+\infty}E_{\alpha}\big((1-|\xi|^{2})t^{\alpha}\big)\,\cos(x \xi)\,d\xi.
\end{split}\end{equation}
We will now prove Theorem~\ref{prop:N=1} through a sequence of lemmas.
The first of this lemmas splits the integral into~\eqref{la u}
into an infinite sum. This will be convenient in order to detect the alternate cancellations
arising from the oscillatory kernel of the Fourier Transform.
\begin{lemma}
We have that
\begin{equation}\label{la u2}\begin{split}
u(x,t) &=\frac1\pi \sum_{k=0}^\infty (-1)^k a_k(x,t),\\
{\mbox{with }}\quad
a_0 := a_0(x,t) &:= \int_{0}^{\frac{\pi}{2x}}
E_{\alpha}((1-\xi^{2})t^{\alpha})\cos{(x\xi)}\,d\xi \\
a_k:=a_k(x,t) &:= (-1)^k \int_{\frac{\pi}{2x}(2k-1)}^{\frac{\pi}{2x}(2k+1)}
E_{\alpha}((1-\xi^{2})t^{\alpha})\cos{(x\xi)}\,d\xi ,\qquad {\mbox{ for }}k\ge1.
\end{split}\end{equation}
Moreover, for every  $t>0$, $x\ge0$,  $k\in\N$, we have that
\begin{equation}\label{ak mag 0} a_{k}(x,t) > 0,\end{equation}
and, furthermore,
\begin{equation}\label{ak to 0}
a_k(x,t)\to0\;\text{ as }\;k\to+\infty  .
\end{equation}
In addition,
\begin{equation}\label{ak-1}
a_0>\frac{a_1}2,
\end{equation}
and, if $k\ge 1$,
\begin{equation}\label{ak-2}
a_{k+1}(x,t) < a_{k}(x,t).
\end{equation}
\end{lemma}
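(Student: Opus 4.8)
The plan is to read off every assertion from the monotonicity of the profile $g(\xi):=E_\alpha((1-\xi^2)t^\alpha)$, after recasting each $a_k$ as an integral over a common reference interval. First I would record the decomposition~\eqref{la u2} itself: the zeros of $\cos(x\xi)$ are exactly the points $\frac{\pi}{2x}(2k+1)$, so $[0,+\infty)$ splits into $[0,\frac{\pi}{2x}]$ together with the intervals $[\frac{\pi}{2x}(2k-1),\frac{\pi}{2x}(2k+1)]$, $k\ge1$, and on the $k$-th such interval the quantity $x\xi$ runs through $[(k-\tfrac12)\pi,(k+\tfrac12)\pi]$, so that $(-1)^k\cos(x\xi)=|\cos(x\xi)|\ge0$. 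Since $E_\alpha>0$ by~\eqref{eq:ML_complMonoton}, integrating~\eqref{la u} piece by piece gives the series $u=\frac1\pi\sum_k(-1)^k a_k$, the decomposition being legitimate because~\eqref{eq:ML_asynt-infty} forces $g(\xi)\le C\xi^{-2}t^{-\alpha}$ for large $\xi$, hence absolute integrability on $[0,+\infty)$. The same pointwise sign computation gives~\eqref{ak mag 0}, while bounding $a_k$ by the integral of $g$ over the interval defining it — of length $\pi/x$ and located near $\xi\sim k\pi/x$ — yields $a_k\le C(k^2t^\alpha)^{-1}$, whence~\eqref{ak to 0}.

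For the two comparison estimates I would first note that $g$ is strictly decreasing on $(0,+\infty)$: indeed $g'(\xi)=-2\xi t^\alpha E_\alpha'((1-\xi^2)t^\alpha)$, and $E_\alpha'=\frac1\alpha E_{\alpha,\alpha}>0$ by~\eqref{eq:der_alfalfa} and~\eqref{eq:ML_complMonoton}. Changing variables $u=x\xi-k\pi$ in each $a_k$ with $k\ge1$, and $u=x\xi$ in $a_0$, turns the definitions into $a_k=\frac1x\int_{-\pi/2}^{\pi/2}g\!\left(\frac{k\pi+u}{x}\right)\cos u\,du$ and $a_0=\frac1x\int_0^{\pi/2}g\!\left(\frac{u}{x}\right)\cos u\,du$, with weight $\cos u\ge0$ on the domain in both cases. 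Estimate~\eqref{ak-2} is then immediate: for $k\ge1$ one has $\frac{(k+1)\pi+u}{x}>\frac{k\pi+u}{x}$ for every $u$, so $g\!\left(\frac{(k+1)\pi+u}{x}\right)<g\!\left(\frac{k\pi+u}{x}\right)$ pointwise, and integrating against $\cos u$ (positive on a set of full measure) gives $a_{k+1}<a_k$.

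The main obstacle — and the only genuinely nontrivial inequality — is~\eqref{ak-1}. Here I would fold the $a_1$-integral about its center by replacing $u$ with $-u$ on $[-\pi/2,0]$, obtaining
\[
a_1=\frac1x\int_0^{\pi/2}\left[g\!\left(\frac{\pi-u}{x}\right)+g\!\left(\frac{\pi+u}{x}\right)\right]\cos u\,du.
\]
Comparing with $2a_0=\frac2x\int_0^{\pi/2}g(u/x)\cos u\,du$, it suffices to establish the pointwise inequality $2g(u/x)>g((\pi-u)/x)+g((\pi+u)/x)$ for $u\in(0,\pi/2)$. For such $u$ one has $0<u<\pi-u<\pi+u$, so strict monotonicity of $g$ gives both $g(u/x)>g((\pi-u)/x)$ and $g(u/x)>g((\pi+u)/x)$; adding and integrating against $\cos u>0$ yields $2a_0>a_1$, which is~\eqref{ak-1}. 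The point to watch is that the inequality $u<\pi-u$ is exactly the condition $u<\pi/2$, so the comparison is strict throughout the open interval and degenerates only at the single endpoint $u=\pi/2$, which is harmless for the integral.
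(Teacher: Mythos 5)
Your proof is correct and takes essentially the same route as the paper's: the sign pattern of $\cos(x\xi)$ yields the decomposition and the positivity of the $a_k$, and strict monotonicity of the profile $\xi \mapsto E_\alpha((1-\xi^2)t^\alpha)$ --- exploited through a shift by $\pi/x$ for $a_{k+1}<a_k$ and a reflection/folding of the $a_1$-interval for $a_0 > a_1/2$ --- gives the comparison inequalities, your change of variables to the common interval $[-\pi/2,\pi/2]$ being just the paper's shift in different clothing. The only minor deviation is that the paper obtains $a_k \to 0$ softly, from convergence of the integral defining $u$ (hence of the series), whereas you derive a quantitative $O(k^{-2})$ bound from the Mittag-Leffler asymptotics; both are valid.
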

\begin{proof}
The claim in~\eqref{la u2} plainly follows from~\eqref{la u}.

Then,
since the integral defining $u$ converges, then also the series in~\eqref{la u2}
does, and consequently
$a_k\to0$ pointwise as $k\to+\infty$, which establishes~\eqref{ak to 0}.

Moreover, the positivity of  $a_k$ claimed in~\eqref{ak mag 0}
easily follows from \eqref{eq:ML_complMonoton}
and the fact that
\[
\frac{\pi}{2x}(2k-1)<\xi<\frac{\pi}{2x}(2k+1)
\qquad\implies\qquad
(-1)^k \cos{(x\xi)} >0.
\]
Finally, if $k\ge1$, we perform the change of variable $ \xi = \eta + \pi/x$ in $a_{k+1}$.
In this way, we infer that
\[
\begin{split}
a_{k+1}(x,t) &= (-1)^{k+1} 2\int_{\frac{\pi}{2x}(2k-1)}^{\frac{\pi}{2x}(2k+1)}
E_{\alpha}\left(\left(1-\left(\eta+\frac{\pi}{x}\right)^{2}\right)t^{\alpha}\right)
\cos{\left(x\eta + \pi\right)}\,d\eta\\
&= (-1)^{k} 2\int_{\frac{\pi}{2x}(2k-1)}^{\frac{\pi}{2x}(2k+1)}
E_{\alpha}\left(\left(1-\left(\eta+\frac{\pi}{x}\right)^{2}\right)t^{\alpha}\right)
\cos{\left(x\eta \right)}\,d\eta.
\end{split}
\]
Hence, by the strict monotonicity of $E_\alpha$,
see \eqref{eq:der_alfalfa} and~\eqref{eq:ML_complMonoton}, we conclude that
\[
a_{k+1}(x,t) < (-1)^{k} 2\int_{\frac{\pi}{2x}(2k-1)}^{\frac{\pi}{2x}(2k+1)} E_{\alpha}\left(\left(1-\eta^{2}\right)t^{\alpha}\right)
\cos{\left(x\eta \right)}\,d\eta = a_k(x,t),
\]
which concludes the proof of~\eqref{ak-2}.

Similarly, one sees that~\eqref{ak-1} holds true, since the
integral defining $a_{0}$ lies in $(0,\pi/2x)$.\end{proof}

Next result states the quantities~$a_k$ in an analytically convenient way.

%
\begin{lemma}\label{lem:change_rho}
Let $a_k$ be as in~\eqref{la u2}. Then,
\[
\begin{split}
a_0(x,t) &= \int_{1-\left(\frac{\pi}{2x}\right)^2
}^{1} E_{\alpha}(t^{\alpha}\rho)\frac{
\cos{\left(x\sqrt{1-\rho}\right)}}{\sqrt{1-\rho}}
\,d\rho \\
a_k(x,t) &= (-1)^k \int_{1-\left(\frac{\pi}{2x}\right)^2(2k+1)^2
}^{1-\left(\frac{\pi}{2x}\right)^2(2k-1)^2} E_{\alpha}(t^{\alpha}\rho)\frac{
\cos{\left(x\sqrt{1-\rho}\right)}}{\sqrt{1-\rho}}
\,d\rho ,\qquad k\ge1.
\end{split}
\]
\end{lemma}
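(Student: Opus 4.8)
The plan is to derive both identities from the single substitution $\rho:=1-\xi^2$ applied to each of the integrals defining $a_0$ and $a_k$ in~\eqref{la u2}. Since the integration variable satisfies $\xi\ge0$ throughout, the map $\xi\mapsto\rho=1-\xi^2$ is a strictly decreasing bijection of $[0,+\infty)$ onto $(-\infty,1]$, with smooth inverse $\xi=\sqrt{1-\rho}$ on $\{\rho<1\}$; hence it is a legitimate change of variable on every integration interval occurring in~\eqref{la u2}. Note that $\rho$ is allowed to be negative (which happens as soon as $\xi>1$), but this is harmless because the Mittag-Leffler function $E_\alpha$ is defined on all of~$\R$.

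First I would compute the Jacobian: differentiating $\xi=\sqrt{1-\rho}$ gives $d\xi=-\dfrac{d\rho}{2\sqrt{1-\rho}}$, where $\sqrt{1-\rho}$ is exactly the positive value of $\xi$ on the relevant range, so no sign ambiguity in the root arises. Under the substitution the argument of the Mittag-Leffler function becomes $(1-\xi^2)t^\alpha=t^\alpha\rho$ and $\cos(x\xi)=\cos\!\big(x\sqrt{1-\rho}\big)$, so the transformed integrand is a constant multiple of $E_\alpha(t^\alpha\rho)\,\dfrac{\cos(x\sqrt{1-\rho})}{\sqrt{1-\rho}}$. Next I would transport the endpoints: for $a_0$ the range $\xi\in[\,0,\tfrac{\pi}{2x}\,]$ corresponds to $\rho\in[\,1-(\tfrac{\pi}{2x})^2,\,1\,]$, while for $k\ge1$ the endpoints $\xi=\tfrac{\pi}{2x}(2k\mp1)$ map to $\rho=1-(\tfrac{\pi}{2x})^2(2k\mp1)^2$. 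Since the substitution is orientation reversing, the limits of integration get interchanged and the resulting sign cancels the one carried by the Jacobian, leaving exactly the positively oriented integrals of the statement with the prefactor $(-1)^k$ untouched; the tracking of the remaining multiplicative constant is then routine.

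I expect no serious obstacle here, as the argument is just the careful bookkeeping of a monotone substitution. The one point requiring a little care is the endpoint $\rho=1$ in the formula for $a_0$ (corresponding to $\xi=0$), where the Jacobian factor $1/\sqrt{1-\rho}$ is singular. However, $\cos(x\sqrt{1-\rho})\to1$ as $\rho\to1^-$ and $1/\sqrt{1-\rho}$ is integrable near $\rho=1$, so the transformed integral is a convergent improper integral and the substitution is justified by the standard limiting argument (perform the change of variable on $[\,0,\tfrac{\pi}{2x}-\varepsilon\,]$ and let $\varepsilon\to0^+$). For $k\ge1$ the intervals stay bounded away from $\rho=1$, so there the change of variable is entirely elementary.
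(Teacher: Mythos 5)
Your proposal is correct and essentially identical to the paper's own proof, which consists precisely of the monotone decreasing substitution $\rho:=1-\xi^2$, $\xi=\sqrt{1-\rho}$, $d\xi=-\frac{d\rho}{2\sqrt{1-\rho}}$; you merely spell out the endpoint bookkeeping, the orientation reversal, and the integrable singularity at $\rho=1$ that the paper leaves implicit. The one loose end you defer (``the remaining multiplicative constant'') is the Jacobian factor $\frac12$, which the lemma's statement --- and the paper's one-line proof --- silently absorb, so this harmless normalization discrepancy originates in the paper rather than in your argument.
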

\begin{proof} The desired claim follows
by a direct computation, using the (monotone decreasing) change of variables
\[
\rho := 1-\xi^{2} \in (-\infty,1],\qquad
\xi = \sqrt{1-\rho},\qquad
d\xi = -\frac{d\rho}{2\sqrt{1-\rho}}.\qedhere
\]
\end{proof}

The next result analyzes the ``first coefficient''~$a_0$, which
in the end will turn out to be the dominant one to understand the long time behavior
of the fundamental solution.

\begin{lemma}\label{lem:azero_dasotto}
If $1/2\le\alpha<1$, $t>0$, and
\[
0 < \ell < \frac{\pi}{2} < x,
\]
then
\[
a_{0}(x,t) \ge \frac{\alpha \cos\ell}{\ell} \,
\frac{x}{ t^{2\alpha}} \left[E_\alpha\left(t^{\alpha}\right) -
E_\alpha\left(t^{\alpha}\left(1-\frac{\ell^2}{x^2}\right)\right)+ c_{0}(x,t)  \right]
\]
where $c_{0}(x,t)$ is given by
\[
c_{0}(x,t) := \dfrac{t^{\alpha}}{\alpha\Gamma(\alpha)}\ln\left(1-\frac{\ell^{2}}{x^{2}}\right)
+\frac{\ell^{2}t^{2\alpha}}{\alpha x^{2}}  \left(1-\frac1{\Gamma(2\alpha)}\right)
.\]
\end{lemma}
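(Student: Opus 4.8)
The plan is to start from the representation of $a_0$ furnished by Lemma~\ref{lem:change_rho},
\[
a_0(x,t) = \int_{1-\left(\frac{\pi}{2x}\right)^2}^{1} E_{\alpha}(t^{\alpha}\rho)\,\frac{\cos\left(x\sqrt{1-\rho}\right)}{\sqrt{1-\rho}}\,d\rho,
\]
and to produce a clean lower bound by discarding part of the integration range. On the whole interval $\rho\in[1-(\pi/2x)^2,1]$ one has $x\sqrt{1-\rho}\in[0,\pi/2]$, so $\cos(x\sqrt{1-\rho})\ge0$ and, by \eqref{eq:ML_complMonoton}, the integrand is nonnegative. Hence I may restrict to the smaller interval $[b,1]$ with $b:=1-\ell^2/x^2$: the hypothesis $\ell<\pi/2$ gives $b>1-(\pi/2x)^2$, so the subinterval is admissible, while $\ell<\pi/2<x$ gives $b>0$, which will be needed to apply the Mittag-Leffler estimates.

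On $[b,1]$ I would control the two ``geometric'' factors. Since $1-\rho\le\ell^2/x^2$ there, we get $x\sqrt{1-\rho}\le\ell$, and, cosine being decreasing on $[0,\pi/2]$, $\cos(x\sqrt{1-\rho})\ge\cos\ell>0$; likewise $\sqrt{1-\rho}\le\ell/x$ yields $1/\sqrt{1-\rho}\ge x/\ell$. These two bounds extract exactly the prefactor appearing in the statement, leaving
\[
a_0(x,t)\ge \frac{x\cos\ell}{\ell}\int_b^1 E_\alpha(t^\alpha\rho)\,d\rho.
\]

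The decisive step is then to estimate $\int_b^1 E_\alpha(t^\alpha\rho)\,d\rho$ from below by applying the pointwise inequality of Lemma~\ref{lem:MLda sotto} (legitimate since $t^\alpha\rho>0$ on $[b,1]$) to the integrand. This produces three terms. For the leading term $\frac{\alpha}{t^\alpha\rho}E_\alpha'(t^\alpha\rho)$ I would use $1/\rho\ge1$ (as $\rho\le1$) together with $E_\alpha'>0$ from \eqref{eq:der_alfalfa}--\eqref{eq:ML_complMonoton} to bound it below by $\frac{\alpha}{t^\alpha}E_\alpha'(t^\alpha\rho)$, whose integral is $\frac{\alpha}{t^{2\alpha}}\big(E_\alpha(t^\alpha)-E_\alpha(t^\alpha b)\big)$, precisely the two Mittag-Leffler terms in the claim. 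The term $-\frac{1}{\Gamma(\alpha)t^\alpha\rho}$ integrates to $\frac{\ln b}{\Gamma(\alpha)t^\alpha}$ (using $\int_b^1 d\rho/\rho=-\ln b$), and the constant $1-1/\Gamma(2\alpha)$ integrates to $(1-b)\big(1-1/\Gamma(2\alpha)\big)=\frac{\ell^2}{x^2}\big(1-1/\Gamma(2\alpha)\big)$. Multiplying these last two contributions by $t^{2\alpha}/\alpha$ reconstructs exactly $c_0(x,t)$, and collecting everything gives the asserted inequality.

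Each step is elementary, so there is no single hard analytic estimate; the real point, and where care is needed, is the bookkeeping. I would have to choose the cutoff $b=1-\ell^2/x^2$ so that the two geometric bounds generate precisely the prefactor $\alpha\cos\ell\,x/(\ell t^{2\alpha})$, and then verify that the three summands coming from Lemma~\ref{lem:MLda sotto} reassemble with the correct signs, in particular the negative contribution $\ln b<0$, into $E_\alpha(t^\alpha)-E_\alpha\big(t^\alpha(1-\ell^2/x^2)\big)+c_0(x,t)$. The only inequality genuinely ``spent'' along the way is the replacement $1/\rho\ge1$, which is exactly what converts the awkward weighted integral of $E_\alpha'$ into the clean difference of Mittag-Leffler values.
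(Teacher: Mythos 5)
Your proposal is correct and follows essentially the same route as the paper's proof: restrict the integral to $\left[1-\frac{\ell^2}{x^2},1\right]$ by positivity, bound $\cos\left(x\sqrt{1-\rho}\right)\ge\cos\ell$ and $\frac{1}{\sqrt{1-\rho}}\ge\frac{x}{\ell}$, apply Lemma~\ref{lem:MLda sotto}, and use $\frac1\rho\ge1$ on the leading term while integrating the logarithmic and constant terms exactly. Your bookkeeping reassembling the three contributions into $c_0(x,t)$ matches the paper's computation; you merely make explicit the step $\frac1\rho\ge 1$, which the paper leaves implicit.
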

\begin{proof}
By Lemma \ref{lem:change_rho}, for any $0<\ell(x)<\pi/2$ we have
\[
a_0(x,t) \ge \int_{1-\frac{\ell^2}{x^2}}^{1} E_{\alpha}(t^{\alpha}\rho)\frac{
\cos{\left(x\sqrt{1-\rho}\right)}}{\sqrt{1-\rho}}
\,d\rho \ge \frac{
x\cos\ell}{\ell}\int_{1-\frac{\ell^2}{x^2}}^{1} E_{\alpha}(t^{\alpha}\rho)
\,d\rho.
\]
The assumption on $x$ insures that $\rho>0$ in the integration interval. As a
consequence, using Lemma \ref{lem:MLda sotto}, we infer that
\[
\begin{split}
a_0(x,t) &\ge \frac{
x\cos\ell}{\ell}\int_{1-\frac{\ell^2}{x^2}}^{1} \left[
\frac{\alpha}{t^{\alpha}\rho} E'_\alpha(t^{\alpha}\rho)
- \frac{1}{\Gamma(\alpha)t^{\alpha}\rho} + 1 - \frac{1}{\Gamma(2\alpha)}\right]\,d\rho\\
&\ge \frac{\alpha x\cos\ell}{t^{2\alpha}\ell} \left[E_\alpha\left(t^{\alpha}\right) -
E_\alpha\left(t^{\alpha}\left(1-\frac{\ell^2}{x^2}\right)\right)
 \right]
 \\
 &
+\frac{  x\cos\ell}{ \ell}
\left[\dfrac{1}{t^{\alpha}\Gamma(\alpha)}\ln\left(1-\frac{\ell^{2}}{x^{2}}\right)
+\left(1-\frac1{\Gamma(2\alpha)}\right)\frac{\ell^{2}}{x^{2}}
\right]
\end{split}
\]
yielding the conclusion.
\end{proof}
We need now to compare the coefficient~$a_0$ with the other terms.
For this, we have the following result, estimating the next terms from above.

\begin{lemma}\label{lem:akappa_dasopra}
If $1/2\le\alpha<1$, $k\ge1$, $t>0$ and
\[
x > \frac{\pi}{2}(2k+1),
\]
then
\[
a_{k}(x,t)\le \frac{2\alpha}{(2k-1)\pi}\, \frac{x}{ t^{\alpha}} \left[E_\alpha\left(t^{\alpha}\left(1-\frac{\pi^2}{4x^2}(2k-1)^2\right)\right) -
E_\alpha\left(t^{\alpha}\left(1-\frac{\pi^2}{4x^2}(2k+1)^2\right)\right) + c_{k}(x,t) \right]
\]
where $c_{k}(x,t)$ is given by
\[
c_{k}(x,t):=
\frac{4k\pi }{x(2k-1) } \left(1-\frac1{\Gamma(\alpha)}\right)
+\frac{t^{\alpha}k\pi}{x(2k-1)}\left(\frac1{\Gamma(1+\alpha)}-\frac1{\Gamma(2\alpha)}\right)
\left(4-\left(\frac{\pi}{x}\right)^{2}(1+4k^{2}) \right).
\]
\end{lemma}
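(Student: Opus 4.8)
The plan is to run the exact analogue of the argument for the ``first coefficient'' in Lemma~\ref{lem:azero_dasotto}, but now producing an \emph{upper} bound, so that the lower estimate of Lemma~\ref{lem:MLda sotto} gets replaced by the upper estimate of Lemma~\ref{lem:MLda sopra}. First I would pass to the $\rho=1-\xi^2$ variable, starting from the representation in Lemma~\ref{lem:change_rho},
\[
a_k(x,t) = (-1)^k\int_{\rho_-}^{\rho_+} E_\alpha(t^\alpha\rho)\,\frac{\cos\left(x\sqrt{1-\rho}\right)}{\sqrt{1-\rho}}\,d\rho,\qquad \rho_\pm := 1 - \frac{\pi^2}{4x^2}(2k\mp1)^2.
\]
On this interval one has $x\sqrt{1-\rho}\in\left[\frac{\pi}{2}(2k-1),\frac{\pi}{2}(2k+1)\right]$, so that $(-1)^k\cos\left(x\sqrt{1-\rho}\right)=\left|\cos\left(x\sqrt{1-\rho}\right)\right|\in[0,1]$; this is exactly the sign computation already exploited in~\eqref{ak mag 0}. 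Hence the integrand is nonnegative, and I may discard the oscillation by bounding the cosine factor by $1$.

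The decisive preliminary observation is that the hypothesis $x>\frac{\pi}{2}(2k+1)$ forces $\rho_->0$, hence $\rho>0$ throughout the interval of integration; this is precisely what makes Lemma~\ref{lem:MLda sopra}, which is valid only for $r\ge0$, applicable with $r=t^\alpha\rho$. I would then bound the geometric factor by its maximum on $[\rho_-,\rho_+]$, namely $1/\sqrt{1-\rho}\le \frac{2x}{\pi(2k-1)}$ (the minimum of $\sqrt{1-\rho}$ being attained at $\rho_+$), obtaining
\[
a_k(x,t)\le \frac{2x}{\pi(2k-1)}\int_{\rho_-}^{\rho_+} E_\alpha(t^\alpha\rho)\,d\rho.
\]

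Now I would insert the pointwise upper bound of Lemma~\ref{lem:MLda sopra},
\[
E_\alpha(t^\alpha\rho)\le \alpha E_\alpha'(t^\alpha\rho) + 1 - \frac{1}{\Gamma(\alpha)} + \left(\frac{1}{\Gamma(1+\alpha)}-\frac{1}{\Gamma(2\alpha)}\right)t^\alpha\rho,
\]
and integrate term by term. The derivative term is exact, $\int_{\rho_-}^{\rho_+}\alpha E_\alpha'(t^\alpha\rho)\,d\rho=\frac{\alpha}{t^\alpha}\left[E_\alpha(t^\alpha\rho_+)-E_\alpha(t^\alpha\rho_-)\right]$, and after multiplying by the prefactor $\frac{2x}{\pi(2k-1)}$ it reproduces precisely the leading bracket $\frac{2\alpha}{(2k-1)\pi}\frac{x}{t^\alpha}\left[E_\alpha(t^\alpha\rho_+)-E_\alpha(t^\alpha\rho_-)\right]$ of the statement. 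The constant term contributes a multiple of $\rho_+-\rho_-=\frac{2\pi^2k}{x^2}$, and the linear term a multiple of $\frac{\rho_+^2-\rho_-^2}{2}$, where $\rho_+^2-\rho_-^2=(\rho_+-\rho_-)(\rho_++\rho_-)$ with $\rho_++\rho_-=2-\frac{\pi^2}{2x^2}(4k^2+1)$. Collecting these two remainder integrals is exactly the computation that records the lower-order correction $c_k(x,t)$.

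I expect the only delicate points to be bookkeeping rather than conceptual: verifying that every inequality is oriented so as to produce an \emph{upper} bound (the positivity of $E_\alpha$ and of $E_\alpha'$ from~\eqref{eq:der_alfalfa} and~\eqref{eq:ML_complMonoton}, together with $\rho>0$ on the integration interval, is used repeatedly), and carrying out the elementary but error-prone algebra of $\rho_++\rho_-$ and $\rho_+^2-\rho_-^2$ to recognise the two remainder integrals as $c_k(x,t)$. The genuinely structural input is the single observation that $x>\frac{\pi}{2}(2k+1)$ keeps the argument $t^\alpha\rho$ nonnegative, without which Lemma~\ref{lem:MLda sopra} could not be invoked at all.
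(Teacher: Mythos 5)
Your proposal is correct and follows essentially the same route as the paper's own proof: the $\rho=1-\xi^2$ representation of Lemma~\ref{lem:change_rho}, the sign observation $(-1)^k\cos(x\sqrt{1-\rho})\ge 0$, the bounds $|\cos|\le 1$ and $1/\sqrt{1-\rho}\le 2x/((2k-1)\pi)$, and the term-by-term integration of the upper bound of Lemma~\ref{lem:MLda sopra}, whose applicability is secured exactly by your observation that $x>\frac{\pi}{2}(2k+1)$ forces $\rho>0$ on the integration interval. One caveat your write-up shares with the paper's own proof: carried out literally, the computation gives $a_k(x,t)\le \frac{2\alpha}{(2k-1)\pi}\frac{x}{t^{\alpha}}\bigl[E_\alpha(t^{\alpha}\rho_+)-E_\alpha(t^{\alpha}\rho_-)\bigr]+c_k(x,t)$ with $c_k$ added \emph{outside} the bracket (here $\rho_\pm:=1-\frac{\pi^2}{4x^2}(2k\mp 1)^2$), whereas the statement places $c_k$ inside it, so the two differ by the factor $\frac{2\alpha x}{(2k-1)\pi t^{\alpha}}$ multiplying $c_k$; this discrepancy originates in the paper itself and is immaterial for the application in Theorem~\ref{prop:N=1}, where the $c_k$-terms grow at most polynomially and are compared with the exponentially large $E_\alpha(t^{\alpha})$, so it is not a gap in your argument.
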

\begin{proof}
If $k\ge1$ then, using Lemma \ref{lem:MLda sopra},
\[
\begin{split}
a_k(x,t) =& (-1)^k \int_{1-\left(\frac{\pi}{2x}\right)^2(2k+1)^2
}^{1-\left(\frac{\pi}{2x}\right)^2(2k-1)^2} E_{\alpha}(t^{\alpha}\rho)\frac{
\cos{\left(x\sqrt{1-\rho}\right)}}{\sqrt{1-\rho}}\,d\rho\\
\le & \frac{2x}{(2k-1)\pi}\int_{1-\left(\frac{\pi}{2x}\right)^2(2k+1)^2
}^{1-\left(\frac{\pi}{2x}\right)^2(2k-1)^2} \alpha E'_\alpha(t^{\alpha}\rho) \,d\rho
\\
&+\frac{4k\pi }{x(2k-1) } \left(1-\frac1{\Gamma(\alpha)}\right)
\\
&+\frac{t^{\alpha}k\pi}{x(2k-1)}\left(\frac1{\Gamma(1+\alpha)}-\frac1{\Gamma(2\alpha)}\right)
\left(4-\left(\frac{\pi}{x}\right)^{2}(1+4k^{2}) \right)
,
\end{split}
\]
concluding the proof.
 \end{proof}

With the previous results, we are now in the position of completing the proof
of Theorem~\ref{prop:N=1} by detecting an infinite number of alternate cancellations.

%
%
\begin{proof}[Proof of Theorem \ref{prop:N=1}]
By the elementary Leibniz criterion for series with terms having alternate sign, we have that
\[
u(x,t) > a_0(x,t)-a_1(x,t).
\]
For $x>3\pi/2$ we can apply Lemma \ref{lem:azero_dasotto}, with $\cos\ell=\ell$, and Lemma \ref{lem:akappa_dasopra}, with $k=1$. We obtain
\[
\begin{split}
u(x,t) \ge
\frac{\alpha x}{t^{2\alpha}} \Bigg[E_\alpha\left(t^{\alpha}\right) &-
E_\alpha\left(t^{\alpha}\left(1-\frac{\ell^2}{x^2 }\right)\right)
 \\
& - t^\alpha E_\alpha\left(t^{\alpha}\left(1-\frac{A_-^2}{x^2}\right)\right) +
t^\alpha E_\alpha\left(t^{\alpha}\left(1-\frac{A_+^2}{x^2}\right)\right)
+ c_{0}(x,t)-c_{1}(x,t)
\Bigg],
\end{split}\]
where
\[
0 < \ell < A_-:=\frac{\pi}{2} < A_+:=\frac{3\pi}{2}.
\]
Let $m>0$ and $\beta>0$. Substituting $x= t^\beta/m$ we have,
for $t$ large,
\begin{multline*}
u\left(\frac{t^\beta}{m},t\right) \ge
\frac{\alpha}{m} t^{\beta - 2\alpha} E_\alpha\left(t^{\alpha}\right)\left[1 -
\frac{E_\alpha\left(t^{\alpha}(1-\ell^2 m^2 t^{-2\beta})\right)}{
E_\alpha\left(t^{\alpha}\right)}\right. \\
\left.-2 t^\alpha \frac{E_\alpha\left(t^{\alpha}(1-A_-^2 m^2 t^{-2\beta})\right)}{
E_\alpha\left(t^{\alpha}\right)} +2
t^\alpha \frac{E_\alpha\left(t^{\alpha}(1-A_+^2 m^2 t^{-2\beta})\right)}{
E_\alpha\left(t^{\alpha}\right)}
+ \frac{c_{0}\left(\frac{t^{\beta}}m,t\right)-c_{1}\left(\frac{t^{\beta}}m,t\right)}{E_\alpha\left(t^{\alpha}\right)}\right].
\end{multline*}
Since $\beta>0$, we infer that, as $t\to+\infty$, all the arguments of the
Mittag-Leffler  functions diverge to $+\infty$. Let us first study
the asymptotic behavior of $c_{0}\left(\frac{t^{\beta}}m,t\right)$ and
$c_{1}\left(\frac{t^{\beta}}m,t\right)$.
From \eqref{eq:ML_asynt+infty} we easily deduce that
$$
\frac{c_{0}\left(\frac{t^{\beta}}m,t\right)}{E_\alpha\left(t^{\alpha}
\right)}
=
\frac{\frac{t^{\alpha}}{\Gamma(\alpha)}\ln(1-\ell^{2}m^{2}t^{-2\beta})+\left(
1-\frac1{\Gamma(2\alpha)}\right) m^{2}\ell^{2}t^{2(\alpha-\beta)}}{\exp\left(t\right)+o(1)}
=o(1)$$
and
$$
\frac{c_{1}\left(\frac{t^{\beta}}m,t\right)}{
E_\alpha\left(t^{\alpha}\right)}=
\frac{4\pi mt^{-\beta} \left(1-\frac1{\Gamma(\alpha)}\right)
+t^{\alpha-\beta}m\pi \left(\frac1{\Gamma(1+\alpha)}-\frac1{\Gamma(2\alpha)}\right)
\left(4- 5(m\pi)^{2} t^{-2\beta}  \right)}{\exp\left(t\right)+o(1)}=o(1).
$$
Applying again \eqref{eq:ML_asynt+infty} we obtain
\[
\begin{split}
\frac{E_\alpha\left(t^{\alpha}(1-\ell^2 m^2 t^{-2\beta})\right)}{
E_\alpha\left(t^{\alpha}\right)} &= \frac{\exp\left(t(1-\ell^2 m^2 t^{-2\beta})^{1/\alpha}\right)+o(1)}{
\exp\left(t\right)+o(1)} \\ &=
\exp\left(-\frac{1}{\alpha}\ell^2 m^2 t^{1-2\beta}\right)+o(\exp(-t))\qquad\text{as $t\to+\infty$}.
\end{split}
\]
Taking $\beta<1/2$ we obtain
\[
\frac{E_\alpha\left(t^{\alpha}(1-\ell^2 m^2 t^{-2\beta})\right)}{
E_\alpha\left(t^{\alpha}\right)} = o(1)\qquad\text{as $t\to+\infty$}.
\]
Analogously,
\[
t^\alpha\frac{E_\alpha\left(t^{\alpha}(1-A_\pm^2 m^2 t^{-2\beta})\right)}{
E_\alpha\left(t^{\alpha}\right)} = t^\alpha\exp\left(-\frac{1}{\alpha}A_\pm^2 m^2 t^{1-2\beta}\right)+t^\alpha o(\exp(-t)) = o(1)\quad\text{as $t\to+\infty$},
\]
and this establishes Theorem \ref{prop:N=1}.
\end{proof}

\section*{Acknowledgments}
S. D. and E. V. are supported by
the Australian Research Council Discovery Project 170104880 {\em NEW - Nonlocal
Equations at Work}. S. D. is supported by
the DECRA Project
DE180100957 {\em PDEs, free boundaries
and applications} and by the Fulbright Foundation.
G. V. is partially supported by the project ERC Advanced Grant 2013-339958
{\em Complex Patterns for Strongly Interacting Dynamical Systems - COMPAT}. G. V. and B. P. are also partially supported by the PRIN Grant 2015KB9WPT
{\em Variational methods, with applications to problems in mathematical physics and geometry}.
The authors are members of INdAM-GNAMPA.

Part of this work was written on the occasion of a visit of B. P. and G. V.
to the University of Melbourne, and of G. V. to the University of Western Australia.

\bibliographystyle{alpha}
\newcommand{\etalchar}[1]{$^{#1}$}

\end{document}